\newtheorem{theorem}{Theorem}
\newtheorem{proposition}{Proposition}[section]
\newtheorem{definition}{Definition}[section]
\newtheorem{corollary}{Corollary}[theorem]
\newtheorem{example}{Example}[section]
\newcounter{Th-Alfa}
\newcommand{\ZZp}{\mathds{Z}_{+}}
\newcommand{\NN}{\mathds{N}}
\newcommand{\PP}{\mathds{P}}
\newcommand{\RR}{\mathds{R}}
\newcommand{\dsty}{\displaystyle}
\newcommand{\Jm}{\mu^{{\scriptscriptstyle \!\alpha\!,\beta }}\!}%------Jacobi Measure
\newcommand{\fJm}[2]{\mu^{\scriptscriptstyle \! \! #1,#2}\!}
\newcommand{\Jw}{w_{\scriptscriptstyle \!\! \alpha,\beta}\!}
\newcommand{\owj}[1][j]{\overline{w}^{ k}_{#1}}
\newcommand{\twj}[1][j]{\widetilde{w}^{k}_{#1}}
\newcommand{\fowj}[2][j]{\overline{w}^{#2}_{#1}}
\newcommand{\ow}{\overline{w}}
\newcommand{\tw}{\widetilde{w}}
\newcommand{\Lp}[1][p]{\mathit{L}^{#1}}
\newcommand{\JLp}[1][p]{\mathit{L}^{#1}\!(\Jm)}
\newcommand{\WWp}[1][p]{\mathbf{W}^{\scriptscriptstyle \alpha,\beta}_{\scriptscriptstyle \varpi,#1}}
\newcommand{\UUp}[1][p]{\mathbf{U}^{\scriptscriptstyle \alpha,\beta}_{\scriptscriptstyle \varpi,#1}}
\newcommand{\uup}[1][p]{{U}^{\alpha,\beta}_{\varpi,#1}}
\newcommand{\Bp}[1][p]{\mathit{B}_{#1}}
\newcommand{\normJLp}[2][p]{\| #2 \|_{\scriptscriptstyle \alpha,\beta,#1}}
\newcommand{\sob}{{\scriptscriptstyle\mathsf{s}}}
\newcommand{\normSp}[2][p]{\| #2 \|_{\sob,{\scriptscriptstyle#1}}}
\newcommand{\normLp}[3][p]{\| #3 \|_{\Lp[#1]\!\!\left(#2\right)}}
\newcommand{\IpS}[2]{\langle #1,#2 \rangle_{\!\sob}}
\newcommand{\IpJ}[2]{\langle #1,#2 \rangle_{\scriptscriptstyle \! \! \alpha,\beta}}
\newcommand{\IPJ}[2]{\left\langle #1,#2 \right\rangle_{\! \scriptscriptstyle \!\!\!\alpha,\beta}}
\newcommand{\Pn}[1][n]{P_{#1}^{\scriptscriptstyle (\alpha,\beta)}\!}
\newcommand{\pn}[1][n]{p_{#1}^{\scriptscriptstyle (\alpha,\beta)}\!}
\newcommand{\fPn}[3][n]{P_{#1}^{\scriptscriptstyle (#2,#3)}\!}
\newcommand{\fpn}[3][n]{p_{#1}^{\scriptscriptstyle (#2,#3)}\!}
\newcommand{\sn}[1][n]{q_{#1}\!}
\newcommand{\PRn}[1][n]{P_{\varpi,#1}^{\scriptscriptstyle(\alpha,\beta)}\!}
\newcommand{\hn}[1][n]{h_{#1}^{\scriptscriptstyle \left( \alpha ,\beta \right)}}
\newcommand{\funD}[4]{\begin{cases}#1 , & \hbox{if } \; #2,\\[2pt]
									#3, & \hbox{if } \; #4 \end{cases}}
\newcommand{\funT}[6]{\begin{cases} #1 , & \hbox{if } \; #2,\\
						#3, & \hbox{if } \; #4,\\
						#5, & \hbox{if } \; #6
					  \end{cases}}						
\newcommand{\funC}[8]{\begin{cases} #1 , & \hbox{if } \; #2,\\
						#3, & \hbox{if } \; #4,\\
						#5, & \hbox{if } \; #6,\\
						#7, & \hbox{if } \; #8
					  \end{cases}}
\newcommand{\FSJn}[1][n]{S_{#1}^{\scriptscriptstyle \alpha,\beta}\!}
\newcommand{\FSSn}[1][n]{S_{#1}}
\title{Discrete-Continuous Jacobi-Sobolev Spaces  \\ and  Fourier Series}
\author[1]{Abel D\'{\i}az-Gonz\'{a}lez\thanks{abdiazgo@math.uc3m.es}}
\author[1]{Francisco Marcell\'{a}n-Espa\~{n}ol\thanks{pacomarc@ing.uc3m.es}}
\author[1]{H\'{e}ctor Pijeira-Cabrera\thanks{hpijeira@math.uc3m.es}}
\author[2]{Wilfredo Urbina-Romero\thanks{wurbinaromero@roosevelt.edu}}
\affil[1]{Universidad Carlos III de Madrid, Spain}
\affil[2]{Roosevelt University, USA}
\date{}
\begin{document}
%%%%%%%%%%%%%%%%%%%%%%%%%%%%%%%%%%%%%%%%%%%%%%%%%%%%%%%%%%%%%%%%%%%%%%%%%%%%%%%%%%%%%%%%%
\maketitle

%%%%%%%%%%%%%%%%%%%%%%%%%%%%%%%%%%%%%%%%%%%%%%%%
\begin{abstract}Let  $p\geq 1$, $\ell\in \NN$, $\alpha,\beta>-1$ and $\varpi=(\omega_0,\omega_1, \dots, \omega_{\ell-1})\in \RR^{\ell}$. Given a suitable function $f$, we define the  discrete-continuous Jacobi-Sobolev norm of $f$ as:
$$
	\normSp{f}:= \left(\sum_{k=0}^{\ell-1} \left|f^{(k)}(\omega_{k})\right|^{p} + \int_{-1}^{1} \left|f^{(\ell)}(x)\right|^{p} d\Jm(x)\right)^{\frac{1}{p}},
$$
where $ d\Jm(x)=(1-x)^{\alpha} (1+x)^{\beta}dx$.   Obviously,  $\normSp[2]{\cdot}= \sqrt{\IpS{\cdot}{\cdot}}$, where $\IpS{\cdot}{\cdot}$ is the inner product.
$$
   \IpS{f}{g}:= \sum_{k=0}^{\ell-1} f^{(k)}(\omega_{k}) \, g^{(k)}(\omega_{k}) + \int_{-1}^{1} f^{(\ell)}(x) \,g^{(\ell)}(x) d\Jm(x).
$$
In this paper, we summarize  the main advances on the convergence of the Fourier-Sobolev series, in norms of type $L^p$, cases continuous and discrete.  We study the completeness of the Sobolev space of functions associated with the norm $\normSp{\cdot}$ and the denseness of the polynomials. Furthermore,  we obtain the conditions for the convergence in  $\normSp{\cdot}$ norm of the partial sum of the Fourier-Sobolev series of   orthogonal polynomials with respect to $\IpS{\cdot}{\cdot}$ .
\end{abstract}

%%%%%%%%%%%%%%%%%%%%%%%%%%%%%%%%%%%%%%%%%%%%%%%%%%%%%%%%%%%%%%%%%%%%%%%%%%%%%%%%%%%%%%%%%

%%%%%%%%%%%%%%%%%%%%%%%%%%%%%%%%%%%%%%%%%%%%%%%%%%%%%%%%%%%%%%%%%%%%%%%%%%%%%%%%%%%%%%%%%%%%%%
\section{Introduction}
%%%%%%%%%%%%%%%%%%%%%%%%%%%%%%%%%%%%%%%%%%%%%%%%%%%%%%%%%%%%%%%%%%%%%%%%%%%%%%%%%%%%%%%%%%%%%%

The convergence problem of the trigonometric Fourier series has been one of the main driving forces behind the mathematical analysis growth. For instance, the development of the  Hilbert spaces theory  is closely related to the problem of the convergence of the Fourier series for square integrable $2\pi$-periodic functions on $[-\pi,\pi],$
$$ \lim\limits_{n\rightarrow \infty }\int_{-\pi}^\pi \left| f\left( x\right) -\sum\limits_{|k| \leq n} \left( \frac{1}{2\pi} \int_{-\pi}^{\pi} f(y) e^{-iky} dy\right) e^{ikx} \right| ^{2} dx =0.$$

In 1927, M. Riesz gave an analytic proof of the convergence of trigonometric Fourier series for $\mathit{L}^p$-integrable $2 \pi$-periodic functions on $[-\pi,\pi]$ using complex methods (see \cite{rie}). This proof was one of the sources to develop the main harmonic analysis tools   such as the  Hardy-Littlewood maximal functions and  the  singular integrals.  In the proof,  Riesz strongly used the conjugated function, which is the first singular integral considered. Later A. P. Calder\'{o}n and A. Zygmund studied singular integrals in higher dimensions using real analysis methods (see \cite{CalZyg}).

 Trough the Hilbert space theory the study of the $\mathit{L}^2$ convergence of the classical orthogonal polynomials Fourier series became a straightforward issue,  since it only depends  on the orthogonality. But we find a very different scenario in the case of the $\mathit{L}^p$ convergence when $p\neq2$.  The study of $\mathit{L}^p$-convergence of classical orthogonal polynomials Fourier series started in the 1940's   with a serie of papers (see \cite{Poll46,Poll47,Poll48,Poll49}). H. Pollard considered the mean convergence of the Legendre, Gegenbauer and Jacobi Fourier expansions extending the harmonic analysis techniques used for Fourier series to  this case (see Theorem \ref{ThPollard}). Pollard used a special decomposition of the Christoffel-Darboux kernel such that the $\mathit{L}^p$ continuity of the Hilbert transform with respect to certain weights can be used.  Later on  B. Muckenhoupt revisited the convergence of Jacobi polynomials (see \cite{Muck69} and Theorem \ref{ThMuck}).

Additionally,   H. Pollard proved in \cite{Poll48} by means of the asymptotic relation between the Hermite polynomials and the Gegenbauer polynomials (see \cite[(5.3.4)]{Szg75}) that the Fourier expansions in terms of the Hermite polynomials only converge in $\mathit{L}^p$-norm when $p=2$. Finally, due to the relation between the Hermite and the Laguerre polynomials, (see \cite[(5.6.1)]{Szg75}), there  exists also an anomalous behavior on the $\mathit{L}^p$-convergence of the Laguerre polynomials Fourier series.

Let $\mathds{F}$ be a linear space of complex-valued functions with an inner product $\langle \cdot,\cdot \rangle$. The inner product $\langle \cdot,\cdot \rangle$ is said to be standard if $\langle zf(z),g(z) \rangle=\langle f(z),z g(z) \rangle$ for every $f,g \in \mathds{F}$, i.e.  standard means that the operator of multiplication by the independent variable $z$ is symmetric.

 We denote by  $\PP$  the  linear space of all polynomials. If $\PP$ is a subspace of $\mathds{F}$, well-known arguments allow to establish the existence of a unique (up to normalization) sequence of orthogonal polynomials with respect to $\langle \cdot,\cdot \rangle$. If this inner product is standard, as an immediate consequence we get that the corresponding sequence of orthogonal polynomials satisfies a  three-term recurrence relation. This is a very important property, connected with several fields   such as: difference equations (\cite{MaMePi13}), numerical analysis (\cite{Gaut04}), Fourier analysis (\cite{Jack41,Osil99,Will19}), among others.  When you do not have such a relation, these approaches are not generally available.

 Otherwise, if an inner product is not symmetric with respect to the multiplication operator, we say that it is non-standard. This is the case of inner products modified by terms involving derivatives. Here we restrict ourselves to the  Sobolev-type inner products, defined as:
 \begin{equation}\label{SobolevProd}
\IpS{f}{g} =\sum_{k=0}^{\ell}   \int_{-\infty}^\infty f^{(k)}(x) g^{(k)}(x) d\mu_k(x), \qquad \text{where } \; \ell \in \ZZp \; \text{ is fixed}.
\end{equation}
and the associated Sobolev norms of type-$p$:
 \begin{equation}\label{Sobolev-pNorm}
\normSp{f}= \Big( \sum_{k=0}^{\ell}   \int_{-\infty}^\infty  \left|f^{(k)}(x) \right|^p  d\mu_k(x)\Big)^{1/p} \qquad \text{where } \; 1\leq  p < \infty \; \text{ is fixed}.
\end{equation}

Observe that the measures $\mu_k$ can be either a discrete measures  or a continuous one. Thus there are essentially three types of such inner products:

\begin{enumerate}
\item[I.]  \emph{Discrete case.} The  support  $\mu_0$ contains infinitely many points  and $\mu_1, \cdots, \mu_{\ell}$ are supported on finite subsets.
\item[II.]   \emph{Continuous case.}  All the measures involved in the inner product \eqref{SobolevProd} (i.e. the norm \eqref{Sobolev-pNorm}) are supported  on subsets with infinitely many points.
\item[III.]   \emph{Discrete-continuous case.} The  support  $\mu_{\ell}$  contains infinitely many points and $\mu_0, \cdots, \mu_{\ell-1}$ are supported on finite subsets.
\end{enumerate}

 The theory of Sobolev ortogonal polynomials  has  been studied intensively in the last 30 years,  we recommend the survey \cite{MarcellanXu} and the references therein to get a good overview.  As it would expect, the orthogonal projection onto the usual $\Lp[2]$ space does not have the required properties, which leads naturally to consider inner products of the form \eqref{SobolevProd}.
 Nonetheless, there are only a few general results about  the extremal polynomials with respect to the norm \eqref{Sobolev-pNorm}, with $p\neq 2$. Most of them, on the  asymptotic behavior of the sequence of the corresponding extremal polynomials (see \cite{AbPiLo19,LopMarPij06,LoPePi05}).

 From  the anomalous behavior of the Fourier expansions in terms of the Hermite and Laguerre polynomials with respect to the $\mathit{L}^p$-convergence, it is fruitless to consider   the convergence of the Fourier series of any of them in the Sobolev case. Therefore,  due to the importance of the Jacobi polynomials in this topic, in the next section, we review   some of their properties and also the main ideas of the proof of the $\mathit{L}^p$ convergence of their Fourier series.   In the section \ref{Sect-SobolevPoly}, we review what is known about the convergence of the  Fourier series of Jacobi-Sobolev polynomials.

Section \ref{Sect-CarSobSpac} is dedicated to defining the discrete-continuous Jacobi-Sobolev spaces and to study necessary and sufficient conditions for its completeness.  In the last section we prove  the convergence of the Fourier series in the norm of the discrete-continuous Jacobi-Sobolev spaces.

%%%%%%%%%%%%%%%%%%%%%%%%%%%%%%%%%%%%%%%%%%%%%%%%%%%%%%%%%%%%%%%%%%%%%%%%%%%%%%%%%%%%%%%%%%%%%%
 \section{Fourier series of Jacobi polynomials}
%%%%%%%%%%%%%%%%%%%%%%%%%%%%%%%%%%%%%%%%%%%%%%%%%%%%%%%%%%%%%%%%%%%%%%%%%%%%%%%%%%%%%%%%%%%%%%

The Jacobi polynomials with parameters $\alpha,\beta >-1,$ $\{\Pn\}_{n\geq 0}$ are orthogonal polynomials
with respect to the inner product
\begin{equation}\label{JacobiIP}
   \IpJ{f}{g}  =  \int_{-1}^{1} f(x) \,g(x) \,d\Jm(x),
\end{equation}
where $ 	d\Jm(x)= \Jw(x)dx=(1-x)^{\alpha} (1+x)^{\beta}dx$, is the {\em Jacobi measure}  (or beta measure). We take the normalization of $\Pn$ such that
	$$ \Pn(1)=\binom{n+\alpha}{n}, \quad \text{where} \quad  \binom{a}{b}={\Gamma(a+1)}/\left({\Gamma(a-b+1)\Gamma(b+1)}\right), $$
for $a,b\in\RR$ and $\Gamma$ denotes the usual Gamma function (see \cite[\S 10.1]{Will19}). More precisely,
	$$\int^{\infty}_{-\infty} \Pn(x) \Pn[m](x) d\Jm(x) =\hn\delta_{n,m},\quad \text{for $n,m =0,1,2, \dots,$}$$ 		
and
\begin{equation}\label{l2norm}
	\hn=\frac{ 2^{\alpha +\beta+1}}{(2n+\alpha +\beta +1)}\frac{\Gamma \left( n+\alpha +1\right)\Gamma \left(n+\beta +1\right) }{\Gamma \left( n+1\right) \Gamma\left( n+\alpha +\beta +1\right) } \qquad \cite[(4.3.3)]{Szg75}.
\end{equation}
Additionally,
\begin{equation} \label{Jacobi-Der}
	\frac{d\Pn}{dx}\left( x\right)=\frac{\left( n+\alpha +\beta +1\right) }{2} \, \fPn[n-1]{\alpha +1}{\beta +1}\left(x\right) \qquad						\cite[(4.21.7)]{Szg75}.
\end{equation}
Also, $\left\{\Pn \right\}_{n\geq0} $ satisfy the three term recurrence relation,
\begin{align*}
			2n(n+\alpha +\beta)\Pn \left( x\right) = & (2n+\alpha +\beta -1) \left(\left( 2n+\alpha +\beta \right)x+\alpha ^{2}-\beta ^{2}\right)\,\Pn[n-1]\left( x\right)\\
			&  -2\left( n+\alpha -1\right) \left(n+\beta -1\right)\frac{ 2n+\alpha +\beta }{2n + \alpha +\beta -2} \Pn[n-2]\left( x\right),
\end{align*}
for $n\geq 2$;  with $\Pn[0](x)=1,$ and $\Pn[1](x)=\frac{1}{2} \left(\alpha +\beta +2\right) x+\frac{1}{2} \left( \alpha -\beta \right).$

For $1\leq p \leq \infty$ let consider the Banach space $\Lp(\Jm)$ of $p$-th power integrable functions with respect to $d\Jm$  on $[-1,1]$, embedded with the norm
\begin{equation}\label{Jacobi-pNorm}
  \normJLp{f}=\funD{\dsty \left(\int_{-1}^1|f(x)|^pd\Jm(x)\right)^{\frac{1}{p}}}{p\in [1,\infty)}{\dsty \inf\{a>0:\nu\left(\{|f|>a\}\right)=0\}}{p=\infty;}
\end{equation}
where $\nu$ denotes the Lebesgue measure on $[-1,1]$.

So,  $\{\Pn\}_{n\geq 0} $ is a sequence of orthogonal polynomials in $\Lp[2](\Jm)$. Let
\begin{equation}\label{OrthNormal-Jacobi}
 \pn=(\hn)^{-1/2}\Pn
\end{equation}
be  the $n$-th Jacobi orthonormal polynomial with respect to the inner product \eqref{JacobiIP}, then  $\normJLp[2]{\pn}=1$, where $\normJLp[2]{\cdot}=\sqrt{\IpJ{\cdot}{\cdot}}$. From \eqref{l2norm}-\eqref{Jacobi-Der}, we have the following property for the $k$-th derivative of the orthonormal Jacobi polynomials:
\begin{align*}
  \left(\pn(x)\right)^{(k)}=   &  A_{n.k}\; \fpn[n-k]{\alpha+k}{\beta+k}(x), \quad \text{where }\\ \nonumber
A_{n,k}= &   \sqrt{  \frac{\Gamma(n+1)\; \Gamma(n+\alpha+\beta+k+1) }{\Gamma(n-k+1)\; \Gamma(n+\alpha+\beta+1)}}\neq0,
\end{align*}
and  $f^{(k)}$ denotes the $k$-th derivative of a function $f$.

Given $f \in  \JLp[1]$,  set
\begin{equation}\label{FourCoef}
	a_n:=\IPJ{f}{\pn}=\int_{-1}^1 f(x) \pn(x) d\Jm(x),
\end{equation}
the $n$-th Fourier coefficient and define the partial sum operators of $f$ as

\begin{equation}\label{FourPartialSum}
	\FSJn(f,x)=\sum\limits_{k=0}^{n} a_{k}\pn[k](x).
\end{equation}

The analysis of the convergence of Fourier-Jacobi series in $\Lp(\Jm)$ has a long history. The first result was obtained by H. Pollard in \cite{Poll48} for Gegenbauer polynomials and then in  \cite{Poll49} for Jacobi polynomials,

\begingroup
\setcounter{Th-Alfa}{\value{theorem}}% store current value of theorem counter
\setcounter{theorem}{0} %assign desired value to theorem counter
\renewcommand\thetheorem{\Alph{theorem}}% locally redefine the representation of the theorem counter
\begin{theorem}{\cite[Th. A]{Poll49}}\label{ThPollard}
Let $ \alpha,\beta \geq -1/2,$
		$$ M(\alpha, \beta) = 4 \; \max \left\{\frac{\alpha +1}{2 \alpha +3}, \frac{\beta +1}{2 \beta +3} \right\}\; \text{ and } \; 				m(\alpha, \beta) = 4 \; \min \left\{\frac{\alpha +1}{2 \alpha +1},\frac{\beta +1}{2 \beta +1} \right\}.$$
	Then, for any  $f\in\Lp(\Jm)$ with $M(\alpha, \beta) < p < m(\alpha, \beta)$, the Fourier-Jacobi expansion of $f$ converges to  $f$ in $			\Lp(\Jm)$, i.e.
	\begin{equation}\label{conv}
		\lim\limits_{n\rightarrow \infty } \normJLp{f-\FSJn(f,\cdot)}= \lim\limits_{n\rightarrow \infty }\left( \int_{-1}^{1}\left|f\left( x\right)- \FSJn(f,x) \right| ^{p}\,d\Jm(x)\right) ^{\frac{1}{p}}=0.
	\end{equation}
\end{theorem}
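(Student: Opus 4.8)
The plan is to reduce the convergence statement \eqref{conv} to a single uniform operator bound and then establish that bound by Pollard's decomposition of the reproducing kernel. First I would invoke the Banach--Steinhaus theorem: since $\PP$ is dense in $\Lp(\Jm)$ for every $1\le p<\infty$, and since the partial sum operator $\FSJn$ reproduces each polynomial of degree at most $n$ (so that $\FSJn(q,\cdot)\to q$ is trivial for every $q\in\PP$ once $n$ exceeds $\deg q$), the convergence \eqref{conv} for all $f\in\Lp(\Jm)$ is equivalent to the uniform boundedness
$$ \sup_{n}\,\|\FSJn\|_{\Lp(\Jm)\to\Lp(\Jm)}\;<\;\infty. $$
In this way the entire problem is converted into an a priori estimate, uniform in $n$, for the operators $\FSJn$ on $\Lp(\Jm)$.

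Next I would realize $\FSJn$ as an integral operator. Combining \eqref{FourCoef} and \eqref{FourPartialSum},
$$ \FSJn(f,x)=\int_{-1}^{1} K_n(x,y)\,f(y)\,d\Jm(y), \qquad K_n(x,y)=\sum_{k=0}^{n}\pn[k](x)\,\pn[k](y), $$
and then apply the Christoffel--Darboux formula to rewrite the kernel as
$$ K_n(x,y)=\gamma_n\,\frac{\pn[n+1](x)\,\pn[n](y)-\pn[n](x)\,\pn[n+1](y)}{x-y}, $$
where $\gamma_n$ is the ratio of leading coefficients furnished by the three-term recurrence and remains bounded as $n\to\infty$. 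The purpose of this step is that the operator now splits, modulo the principal-value diagonal, into two pieces of the shape $x\mapsto\pn[m](x)\,\bigl(H[\pn[m']f\Jw]\bigr)(x)$ with $m,m'\in\{n,n+1\}$, where $H$ denotes the finite Hilbert transform $Hg(x)=\mathrm{p.v.}\int_{-1}^{1}\frac{g(y)}{x-y}\,dy$.

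The core of the argument is then a weighted estimate for $H$ that is uniform in $n$. I would insert the sharp global pointwise bounds for the orthonormal Jacobi polynomials coming from the Szeg\H{o} asymptotics, which near the endpoints are of size $(1-x)^{-\alpha/2-1/4}(1+x)^{-\beta/2-1/4}$ and oscillate with comparable magnitude in the interior. Substituting these bounds for the factors $\pn[m],\pn[m']$ turns the required estimate $\normJLp{\pn[m]\,H[\pn[m']f\Jw]}\le C\,\normJLp{f}$ into the assertion that $H$ is bounded on $L^{p}$ against a weight of the form $(1-x)^{a}(1+x)^{b}$, whose exponents $a,b$ are explicit linear expressions in $\alpha,\beta,p$. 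The Hunt--Muckenhoupt--Wheeden weighted inequality for the Hilbert transform applies exactly when such a weight satisfies the $A_p$ condition, and I would verify that the $A_p$ condition for these exponents is equivalent to the two-sided restriction $M(\alpha,\beta)<p<m(\alpha,\beta)$; the thresholds $4\frac{\alpha+1}{2\alpha+3}$ and $4\frac{\beta+1}{2\beta+1}$ are precisely the values of $p$ at which the endpoint exponents leave the admissible range.

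The main obstacle is this last step: organizing Pollard's decomposition so that the delicate diagonal contributions recombine into a genuine Hilbert transform with an $A_p$ weight, and doing so with constants uniform in $n$. This demands uniform-in-$n$ pointwise control of $\pn$ simultaneously across all three regions (the two endpoint/transition layers near $\pm1$ and the oscillatory interior), and it is here that the hypothesis $\alpha,\beta\ge -1/2$ enters, since it keeps the endpoint weight exponents in the regime where the global Jacobi bounds and the $A_p$ computation remain valid. Once the weighted boundedness of $H$ is secured on the stated $p$-range, the uniform bound on $\FSJn$ follows, and Banach--Steinhaus yields \eqref{conv}.
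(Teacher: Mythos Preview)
The paper does not actually give a proof of this theorem; it is quoted from Pollard \cite{Poll49} and only accompanied by a one-paragraph sketch: reduce \eqref{conv} via Banach--Steinhaus to the uniform $\Lp(\Jm)$-bound $\normJLp{\FSJn(f,\cdot)}\le C\normJLp{f}$, and obtain that bound from a ``special decomposition of the Christoffel--Darboux kernel such that the $\Lp$ continuity of the Hilbert transform with respect to certain weights can be used.'' Your outline reproduces exactly this strategy, so at the level of comparison with the paper there is nothing to add.

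Two remarks on the details you supply beyond the paper's sketch. First, the direct two-term Christoffel--Darboux split you write down is not what Pollard actually uses: if one simply estimates $\pn[n+1](x)\,H[\pn[n]f\Jw](x)$ with the endpoint bounds $|\pn[m](x)|\lesssim (1-x)^{-\alpha/2-1/4}(1+x)^{-\beta/2-1/4}$, the resulting weighted Hilbert inequality fails at one of the two endpoints of the claimed $p$-range. Pollard's ``special decomposition'' inserts an extra algebraic step (using the recurrence to replace $\pn[n+1]$ by a combination involving $(1-x^2)\fpn[n]{\alpha+1}{\beta+1}$), producing three operators whose weights are balanced so that each is bounded precisely on $M(\alpha,\beta)<p<m(\alpha,\beta)$. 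You flag this recombination as ``the main obstacle'' but do not describe it; that is where the real content lies. Second, your appeal to the Hunt--Muckenhoupt--Wheeden $A_p$ theorem is anachronistic for Pollard (1949) but is exactly the route taken in the later treatment of Muckenhoupt \cite{Muck69} that the paper also cites, so as an alternative argument it is perfectly sound once the correct three-term decomposition is in place.
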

\endgroup

Furthermore, in \cite[Th. B]{Poll49}, Pollard also proved that the preceding result fails if $p<M(\alpha, \beta) $ or $p>m(\alpha, \beta) $.  In \cite{NewmanRudin}, J. Newman and W. Rudin, study the case of the extreme points  $p=M(\alpha, \beta) $ or $p=m(\alpha, \beta) $ proving that in that case the  convergence also fails.

According to Banach-Steinhaus uniform boundedness principle (see \cite[\S 5.8]{Rud87}), in order to  prove \eqref{conv}, it is enough to prove the uniform $\Lp(\Jm)$-boundedness of the partial sums, i.e. there exists a constant $C>0$  such that
$$ \normJLp{\FSJn(f,\cdot)} \leq  C \;\normJLp{f} ,$$
 for all $f \in\Lp(\Jm)$ and for all $n\geq 0.$

B. Muckenhoupt extended Pollard's result for all  $\alpha, \beta > -1$ (instead of   $\alpha, \beta \geq -1/2$) in the Corollary of Theorem 1 of  \cite{Muck69}. Muckenhoupt's proof uses  Pollard's decomposition but now for the weights  $w_{ap,bp} (x) = (1-x)^{ap} (1+x)^{bp}$, where  $a, b \in\RR$, such that
\begin{equation}\label{Muck-Cond}
	\left|a\!+\!\frac{1}{p}\!-\!\frac{\alpha +1}{2}\right|\! \leq\! \min\!\left\{\frac{1}{4}, \!\frac{\alpha +1}{2}\!\right\} \quad \!\text{and} \quad \left|b\!+\!\frac{1}{p}\!-\!\frac{\beta+1}{2}\!\right|\!\leq\!\min\!\left\{\! \frac{1}{4},\!\frac{\beta +1}{2}\right\}\!.
\end{equation}

Note that, if  $a=\alpha/p$ and $b =\beta/p$, we get  the Jacobi weights. It is easy to see that these conditions, for the case $\alpha, \beta \geq -1/2,$ give Pollard's conditions $M(\alpha, \beta) < p < m(\alpha, \beta).$ The Muckenhoupt's proof is based on the following inequality, which we will use later.

\begingroup
\setcounter{Th-Alfa}{\value{theorem}}% store current value of theorem counter
\setcounter{theorem}{1} %assign desired value to theorem counter
\renewcommand\thetheorem{\Alph{theorem}}% locally redefine the representation of the theorem counter
\begin{theorem}{\cite[Th. 1]{Muck69}}\label{ThMuck}
Let $ \alpha,\beta > -1,$ $1<p<\infty$ and  $a, b \in\RR$ such that   \eqref{Muck-Cond} holds. Then there exists a constant, $C$, independent of $f$ and $n$, such that
\begin{equation*}
	\int_{-1}^{1} \left|\FSJn(f,x) (1-x)^{a}(1+x)^b\right|^p dx \leq C \int_{-1}^{1}   \left| f(x) (1-x)^{a}(1+x)^b\right|^p dx,
\end{equation*}
where $\FSJn(f,\cdot)$ is given by \eqref{FourCoef}-\eqref{FourPartialSum}.
\end{theorem}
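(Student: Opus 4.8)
The plan is to follow the Pollard--Muckenhoupt route, reducing the uniform (in $n$) weighted boundedness of the partial-sum operators to the weighted $\Lp$-boundedness of the Hilbert transform. I would first put $\FSJn$ in kernel form. With $K_n(x,y)=\sum_{k=0}^{n}\pn[k](x)\,\pn[k](y)$ the Christoffel--Darboux kernel and $a_k=\IPJ{f}{\pn[k]}$, we have $\FSJn(f,x)=\int_{-1}^{1}K_n(x,y)\,f(y)\,d\Jm(y)$, and the Christoffel--Darboux formula gives
\begin{equation*}
K_n(x,y)=\lambda_n\,\frac{\pn[n+1](x)\,\pn(y)-\pn(x)\,\pn[n+1](y)}{x-y},
\end{equation*}
where $\lambda_n$ is the ratio of leading coefficients, bounded above and below uniformly in $n$. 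Writing $Hg(x)=\mathrm{p.v.}\!\int_{-1}^{1}\frac{g(y)}{x-y}\,dy$ and $d\Jm=\Jw\,dy$, this expresses $\FSJn(f,\cdot)$ as the fixed combination
\begin{equation*}
\FSJn(f,x)=\lambda_n\,\pn[n+1](x)\,H\!\big(\pn\,f\,\Jw\big)(x)-\lambda_n\,\pn(x)\,H\!\big(\pn[n+1]\,f\,\Jw\big)(x).
\end{equation*}
Hence it suffices to bound each of the two operators $f\mapsto\pn[n+1]\,H(\pn f\Jw)$ and $f\mapsto\pn\,H(\pn[n+1]f\Jw)$ on $\Lp$ against the weight $(1-x)^{ap}(1+x)^{bp}$, uniformly in $n$.

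The decisive --- and most delicate --- point is that one may not simply replace each Jacobi factor by its size. On compact subsets of $(-1,1)$, with $x=\cos\theta$, the Szeg\H{o} asymptotics read
\begin{equation*}
\Jw(\cos\theta)^{1/2}\,\pn(\cos\theta)=c_{\alpha,\beta}\,(\sin\theta)^{-1/2}\cos\!\big((n+\gamma)\theta+\delta\big)+O(n^{-1}),
\end{equation*}
so $|\pn|$ obeys only the envelope bound $|\pn(x)|\lesssim(1-x)^{-\alpha/2-1/4}(1+x)^{-\beta/2-1/4}$; bounding both the outer factor $\pn[n+1](x)$ and the inner factor $\pn(x)$ by this envelope would cost a non-integrable power at $\pm1$ and ruin the estimate. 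Pollard's device is to keep the oscillation: after the substitution $x=\cos\theta$, each product $\pn[n+1](x)\,H(\pn f\Jw)(x)$ demodulates, via a product-to-sum identity, into a \emph{non-oscillating principal part} --- a conjugate-function (Hilbert transform) operator acting on $f\,\Jw^{1/2}$ in the variable $\theta$ --- plus rapidly oscillating remainders that are uniformly bounded. It is this cancellation, rather than any absolute-value bound, that avoids the double loss of the envelope.

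Once the principal part is recognized as a conjugate-function operator, the task becomes the classical one of bounding the Hilbert transform on $\Lp$ with respect to a power weight. Collecting the exponents that accumulate at each endpoint --- the factor $(1-x)^{a}(1+x)^{b}$ from the target weight, the Jacobian $(1-x^{2})^{-1/2}$ of the substitution $x=\cos\theta$, the common Szeg\H{o} envelope $(\sin\theta)^{-1/2}$ shared by the two normalized factors, and the half-weight $\Jw^{1/2}$ left over from the splitting $f\Jw=(f\Jw^{1/2})\,\Jw^{1/2}$ --- one obtains a power-weight condition for the weighted Hilbert-transform inequality. The exponents of $(1-x)$ and of $(1+x)$ so produced fall in the admissible range of Muckenhoupt $A_p$ type \emph{precisely} when the two inequalities in \eqref{Muck-Cond} hold; the weighted Hilbert-transform estimate then applies and gives the asserted bound with a constant $C$ independent of $f$ and $n$.

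I expect the principal obstacle to be twofold. First, the endpoints: near $x=\pm1$ the trigonometric Szeg\H{o} asymptotics fail and must be replaced, in windows of width comparable to $n^{-2}$, by the Bessel (Mehler--Heine) regime; one has to show that these windows give uniformly bounded contributions while still respecting the exponent budget, and it is exactly here that \eqref{Muck-Cond} is sharp --- the inequalities degenerate to equalities at the $A_p$ borderline, where the bound is known to fail (consistent with the endpoint failure established by Newman and Rudin \cite{NewmanRudin}). Second, making the demodulation rigorous uniformly in $n$ --- controlling the oscillating remainders and the $O(n^{-1})$ errors so that the constant does not grow --- is the technical core of the argument, and is precisely what Pollard's decomposition of the Christoffel--Darboux kernel was designed to achieve.
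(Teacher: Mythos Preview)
The paper does not prove Theorem~\ref{ThMuck}: it is quoted verbatim from \cite[Th.~1]{Muck69} and used as a black box (in the proof of Theorem~\ref{Th-FourSer-Converg}). So there is no ``paper's own proof'' to compare against; your proposal is an attempt to reprove Muckenhoupt's result, which the authors never undertake.

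As for the sketch itself, the overall strategy --- kernel form, reduction to weighted boundedness of the Hilbert transform, power-weight/$A_p$ bookkeeping yielding exactly \eqref{Muck-Cond} --- is the right one and matches what Pollard and Muckenhoupt do. One inaccuracy worth flagging: the decomposition you write down is the raw Christoffel--Darboux identity, and you then propose to rescue it by a trigonometric ``demodulation'' of $\pn[n+1](x)\,H(\pn f\Jw)(x)$. That is not how Pollard's decomposition actually works. Pollard (and Muckenhoupt after him) rewrites $K_n$ \emph{algebraically}, using the three-term recurrence and the relation between $\Pn$ and $\fPn{\alpha+1}{\beta+1}$, as a sum of three pieces of the shape
\[
\frac{\pn(x)\,(1-y^2)\,\fpn{\alpha+1}{\beta+1}(y)}{x-y},\qquad
\frac{(1-x^2)\,\fpn{\alpha+1}{\beta+1}(x)\,\pn(y)}{x-y},
\]
plus a symmetric companion, with bounded coefficients. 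The extra factor $(1-y^2)$ (resp.\ $(1-x^2)$) is exactly what compensates the envelope loss you correctly identified, so that each piece \emph{can} be estimated by replacing the Jacobi factors with their pointwise bounds and then invoking the weighted Hilbert-transform inequality. Your ``product-to-sum'' demodulation is a heuristic for the same cancellation, but making it rigorous uniformly in $n$ and through the Bessel endpoint regime would essentially force you to rediscover Pollard's algebraic splitting; it is cleaner to start from that splitting directly.
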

\endgroup

Additionally, in \cite[Th. 2]{Muck69}, Muckenhoupt also proved that condition \eqref{Muck-Cond} is also necessary.

%%%%%%%%%%%%%%%%%%%%%%%%%%%%%%%%%%%%%%%%%%%%%%%%%%%%%%%%%%%%%%%%%%%%%%%%%%%%%%%%%%%%%%%%%%%%%
\section{Fourier series of Sobolev orthogonal polynomials}\label{Sect-SobolevPoly}
%%%%%%%%%%%%%%%%%%%%%%%%%%%%%%%%%%%%%%%%%%%%%%%%%%%%%%%%%%%%%%%%%%%%%%%%%%%%%%%%%%%%%%%%%%%%%

From now on, we will focus our attention on summarizing the main advances on the convergence of the Fourier-Sobolev series with respect to Sobolev norms of type-$p$ given by \eqref{Sobolev-pNorm}. The study of this convergence faces several problems. First of all, in general, the Sobolev  polynomials do not satisfy a three-term recurrence relation and, therefore, there is not a Christoffel-Darboux formula for their Dirichlet-Szeg\H{o} kernel. As a consequence, the argument outlined above can no be applied  to  this case. Additionally, since there are three different types of Sobolev inner products, we will need three different methods to study them separately. Let us consider the results that have been obtained so far in each case:

I.  \emph{Discrete case.}  In \cite{CiaMin18}, the authors  give  a complete characterization of the boundedness of the partial sum operators with respect to the inner product \eqref{SobolevProd} when $\ell=1$, $\mu_0 = \mu^{\alpha} + M(\delta_{-1} + \delta_1)$ where $\mu^{\alpha}$ is the Gegenbauer probability measure
		$$d\mu^{\scriptscriptstyle \!\alpha}(x) = \frac{\Gamma(2\alpha+2)}{2^{2\alpha+1} \Gamma^2(\alpha)} (1-x^2)^\alpha dx, \quad \alpha > -1/2,$$
and	$\mu_1 = N(\delta_{-1} + \delta_1)$. Due to the lack of a Christoffel-Darboux formula for Sobolev orthogonal polynomials, the main tools  involve  a special case of a general weighted transplantation theorem, see 	\cite[Th.  1.6]{Muck86} and the other one is a particular case of the multiplier result \cite[Th.  1.10]{Muck86}.
Using these tools, they prove (see  \cite[Th.  1.1]{CiaMin18}) the boundedness of the partial sum operators in the corresponding space for
		$$\frac{4(\alpha +1)}{2\alpha+3} < p < \frac{4(\alpha +1)}{2\alpha+1}.$$
Then, using the denseness of the polynomials in the corresponding Sobolev space (see \cite[Th. 1.2]{CiaMin18}) and the Banach-Steinhaus uniform boundedness principle, they  obtain that, for these values of $p$, the sequence of the partial sums of $f$, $\{S_n(f,\cdot)\}_{n\geq0}$, converges to the function $f,$ in the Sobolev norm $\normSp{\cdot}$ (see  \cite[Cor. 1.3]{CiaMin18}).

II.   \emph{Continuous case.}  The first article in this direction is due to F. Marcell\'{a}n, Y. Quintana and A. Urieles  \cite{MarceVickyUrieles}. They  consider the inner product
		$$\IpS{f}{g}= \int_{-1}^1 f(x) g(x) d\Jm(x) + \int_{-1}^1 f'(x) g'(x) d\fJm{\alpha+1}{\beta+1}(x).$$
	In  this case, it follows from \eqref{Jacobi-Der} that  the corresponding Jacobi-Sobolev polynomials are, up to a constant factor the classical Jacobi polynomials and, therefore, they satisfy a three term-recurrence relation. In the paper, the authors actually only focus on  the case of the Legendre-Sobolev polynomials, i.e. $\alpha=\beta =0$. The existence of a recurrence relation for them (see \cite[(3.35)]{MarceVickyUrieles}) implies that an explicit formula can be given for the Sobolev-Dirichlet-Szeg\H{o} kernel corresponding to the $n$-th partial sum. Unfortunately the corresponding part for the derivatives generates a hypersingular transform which is not easy to handle  without having to ask the function a lot of regularity. Consequently, the final result is not  entirely  satisfactory (see \cite[Th.  3.1]{MarceVickyUrieles}).
	
 The second article in this direction is due to  \'{O}.  Ciaurri and J. M\'{\i}nguez \cite{CiaMin19}. They  consider the  general case of the Fourier series of Jacobi-Sobolev polynomials, $\alpha, \beta >-1$, with  an arbitrary number order derivatives, i.e. orthogonal polynomials with respect to Sobolev-type inner product
	$$ \IpS{f}{g}= \sum_{k=0}^\ell \int_{-1}^1 f^{(k)}(x) g^{(k)}(x)\,  d\fJm{\alpha+k}{\beta+k}(x), \quad \alpha, \beta >-1, \quad \ell  \geq 1.
$$
Again, by \eqref{Jacobi-Der}, the corresponding Jacobi-Sobolev polynomials are multiples of the classical Jacobi polynomials. Because these polynomials faced similar problems as in \cite{MarceVickyUrieles}, they consider an alternative approach following basically what they had developed in \cite{CiaMin19}, applying  Muckenhoupt's multipliers and transplantation operators for Jacobi expansions  (see \cite[Th.  5.1, Th. 7.1 and Th. 1.10]{Muck86}, \cite{CiNoSt07})  and also Pollard's results \cite{Poll49}.

They prove in  \cite[Th.  1]{CiaMin19}, the boundedness of the partial sum operators $\FSSn(f,\cdot)$ in the corresponding space when
	$$\max \left\{\frac{4(\alpha +\ell+1)}{2(\alpha+\ell)+3},\frac{4(\beta +\ell+1)}{2(\beta+\ell)+3}\right\} < p < \min\left\{\frac{4(\alpha +				\ell+1)}{2(\alpha+\ell)+1},\frac{4(\beta +\ell+1)}{2(\beta+\ell)+1}\right\}.$$
Then, using the denseness of the polynomials in the corresponding Sobolev space, see \cite{RodAlvRomPestII} and the Banach-Steinhaus uniform boundedness principle, they  obtain that  this result  is equivalent to the convergence of  $\FSSn(f,\cdot)$ to $f$ in the Sobolev norm $\normSp{\cdot}$ (see  \cite[Corollary 2]{CiaMin19}).

Let $\left(d \mu_{0}, d \mu_{1}\right)$ be a pair of  positive Borel measures on the real line with finite moments of all orders and $P_{i,n}$ be the monic orthogonal polynomial of degree $n$ with respect to $d \mu_{i}.$   The pair   $\left(d \mu_{0}, d \mu_{1}\right)$ is called \emph{coherent} if there exists a sequence of nonzero real numbers $\left\{a_{n}\right\}_{n \geq 1}$ such that
$$
P_{1,n}\left(z\right)=\frac{P_{0,n+1}^{\prime}\left(z\right)}{n+1}+a_{n} \frac{P_{0,n}^{\prime}\left(z\right)}{n}, \quad n \geq 1.
$$
The notion of coherent pairs of measures was introduced in \cite{IKNS-S91} and it is closely related with the Sobolev inner product
\begin{equation}\label{Coherent-IP}
\IpS{f}{g}=\int_{a}^{b} f(x) g(x) d \mu_{0}(x)+\int_{a}^{b} f^{\prime}(x) g^{\prime}(x) d \mu_{1}(x),
\end{equation}
where $-\infty \leq a < b \leq \infty$. For a review of the study of the coherent pair of measures we recommended \cite[\S 5]{MarcellanXu}.  The study of convergence of Fourier series of orthogonal polynomials with respect to the inner product \eqref{Coherent-IP} has been carried out in \cite{CiaMin19-2}.

III:   \emph{Discrete-continuous case.}  I. I. Sharapudinov considered the case of the inner product
$$ \IpS{f}{g}= \sum_{k =0}^{\ell-1} f^{(k)} (-1) g^{(k)} (-1) + \int_{-1}^1 f^{(\ell)} (t)g^{(\ell)} (t) (1-t)^\alpha dt.$$
He proved that the associated Fourier series converges uniformly in $[-1,1]$ using that this case is a particular case of what he called mixed series of Jacobi polynomials considered by him in a previous paper.

In this case, as far as we know, a general result about convergence of the Fourier series in the corresponding Sobolev norm $\normSp{\cdot}$ is still unknown.

We will consider a more general case in this direction in the next section.

%%%%%%%%%%%%%%%%%%%%%%%%%%%%%%%%%%%%%%%%%%%%%%%%%%%%%%%%%%%%%%%%%%%%%%%%%%%%%%%%%%%%%%%%%%%%%%%%%%%%%%%%%%
%%%%%%%%%%%%%%%%%%%%%%%%%%%%%%%%%%%%%%%%%%%%%%%%%%%%%%%%%%%%%%%%%%%%%%%%%%%%%%%%%%%%%%%%%%%%%%%%%%%%%%%%%%%%
\section{Discrete-Continuous Jacobi-Sobolev Spaces}\label{Sect-CarSobSpac}

First, we need to consider the natural linear spaces for the problem. Let  $p\geq 1$, $\ell\in \NN$, $\alpha,\beta>-1$ and $\varpi=(\omega_0,\omega_1, \dots, \omega_{\ell-1})\in \RR^{\ell}$. Given a suitable function $f$, we define the  Jacobi-Sobolev norm as:
\begin{align}\label{NorSobp}
	\normSp{f}= \left(\sum_{k=0}^{\ell-1} \left|f^{(k)}(\omega_{k})\right|^{p} + \int_{-1}^{1} \left|f^{(\ell)}(x)\right|^{p} d\Jm(x)\right)^{\frac{1}{p}}.
\end{align}

It is clear that  $\normSp{\cdot}$ is a norm on the linear space of all polynomials $\PP$. We define the Jacobi-Sobolev  discrete-continuous  space $\WWp$ as the completion of  the normed space $(\PP,\normSp{\cdot})$. In other words, $\WWp$ is the collection of all equivalence classes of Cauchy sequences in $(\PP,\normSp{\cdot})$, under the following equivalence relation: $\{P_n\}_{n\geq0},$ $\{Q_n\}_{n\geq0}$ Cauchy sequences in  $(\PP,\normSp{\cdot})$ are equivalent $\{P_n\}_{n\geq0} \sim  \{Q_n\}_{n\geq0}$ if and only if $\lim_{n\to \infty}\normSp{P_n-Q_n}=0$.

On the other hand, let $\uup$ be the linear space of functions $f:\RR \longrightarrow \RR$ satisfying:
\begin{enumerate}%[align=left]
\item $f$ has derivative at $\omega_{k}$ of order $k$, $k=1,2, \dots,\ell-1$.
\item $f$ is $\ell-1$ times differentiable on $(-1,1)$ and $f^{(\ell-1)}$ is absolutely continuous on every compact subset of $(-1,1)$.
\item $f^{(\ell)}\in \JLp$.
\end{enumerate}
Obviously, \eqref{NorSobp} is a seminorm on $\uup$ and $\PP \subset \uup$. Let us write $f\sim g $ if and only if $\normSp{f-g}=0$. It is a straightforward result that this is an equivalence relation on $\uup$. Denote the linear space of the equivalence classes of this relation by  $\UUp$. Given $F\in \UUp$ and if $f$ is any element of $F$, then we define the norm in $\UUp$ as $\normSp{F}=\normSp{f}.$ Thus, $(\UUp,\normSp{\cdot})$ is a normed space.

In what follows,  we will introduce some notions which will be useful in the sequel. Given $\vec{x}=(x_0,x_1,\allowbreak\dots,x_{m-1})$, $\vec{y}=(y_0,y_1,\dots,y_{m-1}) \in \RR^m$, there exists a unique polynomial $\mathcal{A}_{\vec{x},\vec{y} }$ of degree at most $m-1$, such that
\begin{equation*}
	\mathcal{A}_{\vec{x},\vec{y}}^{(k)}( x_{k})=y_k, \quad k=0,1,\dots,m -1.
\end{equation*}
The polynomial $\mathcal{A}_{\vec{x},\vec{y} }$  is known   as the \emph{Abel-Goncharov interpolation polynomial}, associated with $\vec{y}$ on $\vec{x}$ (see \cite[Ch. 3]{AgWo93},  \cite[\S2.6]{Dav75}, or \cite[\S 8]{Whi35}). The Abel-Goncharov interpolation polynomial is a generalization of Taylor's polynomial, which corresponds to the case $ x_{m-1} = x_{m-2}=\cdots= x_0$.

 The polynomial $\mathcal{A}_{\vec{x},\vec{y}}$ is  given by the explicit expression
\begin{equation*}
   \mathcal{A}_{\vec{x},\vec{y}}(z)= \sum_{k=0}^{m-1} y_k \mathcal{G}_{\vec{x},k}(z),
\end{equation*}
where $\mathcal{G}_{\vec{x},0}\equiv 1$ and   $\mathcal{G}_{\vec{x},k}$, for each $k=1,\dots, m-1$,  is the polynomial of degree $k$, generated by the $k$-th iterated integral
\begin{equation*}
\mathcal{G}_{\vec{x},k}(s) = \int_{ x_{0} }^{s} \int_{ x_{1}}^{s_{1}} \! \cdots\!  \int_{ x_{k-2}}^{s_{k-2}}\!\!\int_{ x_{k-1}}^{s_{k-1}}  ds_{k}ds_{k-1}\!\cdots \!ds_{2} ds_{1}  .
\end{equation*}
   The polynomial $\mathcal{G}_{\vec{x},k}$ is called the \emph{$k$-th Goncharov's polynomial associated with }$\vec{x}$ and satisfies
   \begin{equation}\label{Goncharov_PolyDer}
       \mathcal{G}_{\vec{x},k}^{(\nu)} ( x_{\nu})= \funD{0}{\nu\neq k}{1}{\nu=k.}
   \end{equation}

\begin{proposition} \label{PropDensePoly}
	Let   $p \in [1,\infty)$ and  $F\in \UUp$, then there exists a sequence of polynomials $\{P_n\}_{n\geq0}$ such that
		$$\normSp{P_n-f}\longrightarrow 0,$$
	for every $f\in F$. As a consequence,   $\UUp$ can be identified as a normed subspace of \, $\WWp$.
\end{proposition}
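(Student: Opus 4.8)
The plan is to reduce the statement to two ingredients: the density of polynomials in $\JLp$ for $p\in[1,\infty)$, and the use of the Abel--Goncharov machinery to \emph{lift} an approximation of the top-order derivative $f^{(\ell)}$ to an approximation in the full norm $\normSp{\cdot}$ while matching the discrete data \emph{exactly}. Fix $F\in\UUp$ and a representative $f\in F$. Since $\alpha,\beta>-1$ the measure $\Jm$ is finite on $[-1,1]$, and because $p<\infty$ the Weierstrass theorem together with the density of $C[-1,1]$ in $\JLp$ produces a sequence of polynomials $\{q_n\}_{n\geq0}$ with $\normJLp{q_n-f^{(\ell)}}\to0$; here property~3 in the definition of $\uup$, namely $f^{(\ell)}\in\JLp$, is exactly what is needed.

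Next I would turn each $q_n$ into a polynomial $P_n$ carrying the prescribed jet at the nodes $\varpi=(\omega_0,\dots,\omega_{\ell-1})$. Set
$$
P_n(z)=\sum_{k=0}^{\ell-1} f^{(k)}(\omega_k)\,\mathcal{G}_{\varpi,k}(z) + \int_{\omega_0}^{z}\!\int_{\omega_1}^{s_1}\!\!\cdots\!\int_{\omega_{\ell-1}}^{s_{\ell-1}} q_n(s_\ell)\,ds_\ell\cdots ds_1 .
$$
This is a polynomial, since $q_n$ is and iterated integration of a polynomial returns a polynomial. The first sum is precisely the Abel--Goncharov polynomial $\mathcal{A}_{\varpi,\vec{y}}$ with data $\vec{y}=(f(\omega_0),\dots,f^{(\ell-1)}(\omega_{\ell-1}))$; by \eqref{Goncharov_PolyDer} it satisfies $\big(\sum_k f^{(k)}(\omega_k)\mathcal{G}_{\varpi,k}\big)^{(\nu)}(\omega_\nu)=f^{(\nu)}(\omega_\nu)$, and having degree at most $\ell-1$ it contributes nothing to the $\ell$-th derivative. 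The iterated integral, call it $u$, satisfies by direct differentiation $u^{(k)}(\omega_k)=0$ for $k=0,\dots,\ell-1$ (each integration sign, evaluated at its own base point, vanishes) and $u^{(\ell)}=q_n$. Hence $P_n^{(k)}(\omega_k)=f^{(k)}(\omega_k)$ for all $k\le\ell-1$ and $P_n^{(\ell)}=q_n$.

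With this construction the discrete part of $\normSp{P_n-f}$ vanishes identically, so
$$
\normSp{P_n-f}^{p}=\sum_{k=0}^{\ell-1}\big|P_n^{(k)}(\omega_k)-f^{(k)}(\omega_k)\big|^{p}+\int_{-1}^{1}\big|q_n(x)-f^{(\ell)}(x)\big|^{p}\,d\Jm(x)=\normJLp{q_n-f^{(\ell)}}^{p}\longrightarrow0,
$$
which is the claimed approximation; it holds simultaneously for every $f\in F$ because any two representatives satisfy $\normSp{f-g}=0$. For the final assertion I would send $F\mapsto[\{P_n\}]$, the class of the Cauchy sequence $\{P_n\}$ in $\WWp$ (it is Cauchy since it converges in seminorm). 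The reverse triangle inequality gives $\normSp{P_n}\to\normSp{f}=\normSp{F}$, so the map is an isometry; it is well defined and linear because two representatives of $F$, and two approximating sequences, differ in $\normSp{\cdot}$ by $0$ and hence yield $\sim$-equivalent Cauchy sequences. This identifies $\UUp$ with a normed subspace of $\WWp$.

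The one point that genuinely requires care, as opposed to the routine density argument and the derivative bookkeeping, is that the nodes $\omega_k$ may lie \emph{outside} $[-1,1]$, where the integral part of $\normSp{\cdot}$ exerts no control on $f$ and where $f^{(\ell-1)}$ need not even be absolutely continuous. This is exactly why one must build $P_n$ directly out of the numbers $f^{(k)}(\omega_k)$ and the polynomial $q_n$, rather than attempt an Abel--Goncharov remainder expansion of $f$ itself: the construction above never integrates $f$, so nothing beyond the bare existence of the derivatives $f^{(k)}(\omega_k)$ (property~1) and $f^{(\ell)}\in\JLp$ (property~3) is invoked.
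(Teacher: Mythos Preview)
Your proof is correct and follows essentially the same route as the paper: approximate $f^{(\ell)}$ by polynomials in $\JLp$, then lift via the Abel--Goncharov interpolant plus an iterated integral so that the discrete data are matched exactly and the Sobolev error reduces to the $\JLp$ error. Your additional remarks on the isometric identification and on why nodes outside $[-1,1]$ cause no trouble are accurate elaborations of points the paper leaves implicit.
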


\begin{proof}
	Let $f\in F\in \UUp$, then $f^{(\ell)}\in \JLp$. From the denseness of the linear space of continuous functions on  $ \JLp$ (see \cite[Theorem 3.14]{Rud87}) and  Weierstrass' approximation theorem  \cite[\S 4.1]{BorErd95}, for all $n\in \NN$, there exists a polynomial  $p_n$   such that
%Weierstrass approximation theorem in $ \JLp$ (see \cite[\S 4.1, E7]{BorErd95}),  %
$$
\normJLp{f^{(\ell)}-p_{n}}<\frac{1}{n}.
$$
	Now, consider the vector $f(\varpi)=(f(\omega_0),f'(\omega_1),\dots, f^{(\ell-1)}(\omega_{\ell-1}))$ and the polynomial
\begin{align*}
P_{n}(s)=&\mathcal{A}_{\varpi,f({\varpi})}(s)+\int_{ \omega_0}^s\int_{ \omega_1}^{s_1}\cdots \int_{ \omega_{\ell-2}}^{s_{\ell-2}}\int_{ \omega_{\ell-1}}^{s_{\ell-1}}p_{n}(s_{\ell})ds_{\ell}ds_{\ell-1}\cdots ds_2ds_1.
\end{align*}
	It is clear that this polynomial satisfies $P_{n}^{(k)}(\omega_k)=f^{(k)}(\omega_k)$ for $k=0,1, \dots,\ell-1$, and $P_{n}^{(\ell)}\equiv p_{n}$. Thus  we have
$$
		\normSp{f-P_{n}}=\normJLp{f^{(\ell)}-P_{n}^{(\ell)}}= \normJLp{f^{(\ell)}-p_{n}}<\frac{1}{n},
$$
	and we get the first statement. Note that the sequence $\{P_{n}\}_{n\geq0}$ does not depend on the function $ f \in F$ chosen. Indeed, if  $h\sim f$, then
		$$\normSp{P_{n}-h}\leq\normSp{P_{n}-f}+\normSp{f-h}=\normSp{P_{n}-f}\longrightarrow 0.$$
	The identification  of $\UUp$ as a normed subspace of $\WWp$ follows from the fact that $\{P_{n}\}_{n\geq0}$ is a Cauchy sequence on ($\PP,\normSp{\cdot}$)
		$$\normSp{P_n-P_m} \leq  \normSp{P_n-f}+\normSp{f-P_m}.$$
	Then, if $F$ is the equivalence class of $f$ it can be identified with the equivalence class of $\{P_n\}_{n\geq0}$ in $\WWp$.
 \end{proof}
 From now on, we will not distinguish between the equivalent class $F$ and any representative of it $f$,  and we will   write $\UUp \subset\WWp$.  It is worth to remark that although $\UUp$ is the space of suitable functions  $f$ (in the sense of (2.) in the definition of $\UUp$) of minimum requirements such that $\normSp{f}$ makes sense, this is not broad enough to describe every element of $\WWp$, at least not for every combination of the parameters $p,\alpha,\beta,\ell$ and $\varpi$ (see Theorem \ref{Th-Completeness}).

In this section we obtain all the combinations of the values $p,\alpha,\beta$ and $\varpi$ for which $\WWp=\UUp$.
The study of completeness of Sobolev spaces for the general vectorial measures $\vec{\mu}=(\mu_0,\mu_1,\dots,\mu_{\ell})$, was given by J. M. Rodr\'{\i}guez et al. in \cite{RodAlvRomPestI,RodAlvRomPestII}. The Sobolev space defined in \cite{RodAlvRomPestI,RodAlvRomPestII}, in the case of our specific vectorial measure $\vec{\mu}=(\delta_{\omega_0},\allowbreak \delta_{\omega_1}, \dots,\delta_{\omega_{\ell-1}},\Jm)$, agrees with our space $\UUp$ in the essential case when $\omega_i\in[-1,1]$, for $i=0,1,\dots,\ell-1$ and $\UUp$ is complete, see Theorem \ref{Th-Completeness}.

We are going to prove the completeness of $\UUp$ by means of \cite[Th 5.1]{RodAlvRomPestI}. In this paper the authors introduce several definitions which we will follow in order to use the theorem mentioned above. We also prove that for the complementary combinations of the parameters $\alpha,\beta,\ell,\varpi$, the space $\UUp$ is not complete and, consequently, $\UUp\neq\WWp$.

\begin{definition}{(see \cite[Definition 1.4]{KuOp84}) and  \cite[Definition 2]{RodAlvRomPestI})}
Let $p\geq 1$ and  $K$ be a closed interval on the real line. A weight function $w$ defined on $K$ satisfies condition $\Bp(K)$ if $w^{-1}\in \Lp[\frac{1}{p-1}](\nu_K)$, where $\nu_K$ denotes the Lebesgue measure on $K$, and we write $ w\in \Bp(K)$.

If $J\subset \RR$ is any interval and $w$ is a weight function defined on $J$, then we say that $w\in \Bp(J)$ when $w\in \Bp(K)$ for every compact interval $K\subset J$. %where $\mathit{L}^1(K)$ denotes the space of locally integrable functions on $K$.
\end{definition}

\begin{example}\label{Ex-Bp}
Consider the Jacobi weight function $\Jw(x)=(1-x)^{\alpha}(1+x)^{\beta}$, with $\alpha,\beta>-1$. %where $\mathbb{1}_{[-1,1]}$ denote  the  characteristic function of $[-1,1]$.
\begin{enumerate}
\item If $p=1$, then
	$$\Jw\in\funC{\Bp[1]([-1,1])}{\alpha,\beta\in (-1,0]}{\Bp[1]((-1,1])}{\alpha\in(-1,0], \beta\in (0,\infty)}{\Bp[1]([-1,1))}{\alpha\in (0,\infty), \beta\in (-1,0]}{\Bp[1]((-1,1))}{\alpha,\beta\in (0,\infty).}$$
\item If $p>1$, then
	$$\Jw\in\funC{\Bp([-1,1])}{\alpha,\beta\in (-1,p-1)}{\Bp((-1,1])}{\alpha\in (-1,p-1), \beta\in [p-1,\infty)}{\Bp([-1,1))}{\alpha\in [p-1,\infty), \beta\in (-1,p-1)}{\Bp((-1,1))}{\alpha,\beta\in [p-1,\infty).}$$
\end{enumerate}	
\end{example}

\begin{proposition}
Let $\alpha,\beta>-1,p\geq1$, $k=0,1,\dots,\ell$;  and consider the vectorial weight
\begin{align}\label{Def-JacWeightVector}
w(x)= & (w_0(x),w_1(x),\dots,w_{\ell-1}(x),w_\ell(x))=(0,0,\dots,0,\Jw(x)), \quad  x\in[-1,1],\\
\label{Def-Ik}
& \text{the interval } \quad  	I_k=\funD{(k \,p-1,(k+1)p-1]}{k=0,1,\dots,\ell-1}{(\ell\, p-1,\infty)}{k=\ell,}
\end{align}
and  the vectorial weight $\owj[]=(\owj[0],\owj[1],\dots,\owj[\ell])$, where
	$$\owj(x)=\funT{\mathbb{1}_{[-1,0]}(x)}{j=0,1,\dots,\ell-k-1}{(1+x)^{\beta-(\ell-j)p}\mathbb{1}_{[-1,0]}(x)}{j=\ell-k,\dots,\ell-1}{\Jw(x)}{j=\ell,}$$
and $\mathbb{1}_{A}$ denotes  the  characteristic function of the set $A \subset \RR$.

If $k$ is such that $\beta\in I_{k}$, then $\owj[]$ is a right completion of $w$ with respect to $-1$ (see \cite[Definition  5]{RodAlvRomPestI}). Analogously, we can found a left completion of $w$ with respect to $1$. Namely
	$$\owj(x)=\funT{\mathbb{1}_{[0,1]}(x)}{j=0,1,\dots,\ell-k-1}{(1-x)^{\alpha-(\ell-j)p}\mathbb{1}_{[0,1]}(x)}{j=\ell-k,\dots,\ell-1}{\Jw(x)}{j=\ell,}$$
where $\alpha\in I_k$.
\end{proposition}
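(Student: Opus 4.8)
The plan is to verify that $\owj[]=(\owj[0],\dots,\owj[\ell])$ satisfies every clause of the definition of a right completion of $w$ with respect to $-1$ given in \cite[Definition 5]{RodAlvRomPestI}. Because the reflection $x\mapsto -x$ interchanges $\pm 1$, swaps the roles of $\alpha$ and $\beta$ in $\Jw$, and carries a right completion at $-1$ into a left completion at $1$, it is enough to establish the assertion for the right completion; the statement at $1$ then follows verbatim with $\alpha$ in place of $\beta$. Every entry $\owj$ with $j<\ell$ is supported on $[-1,0]$, so the verification is entirely local near $-1$, and there $(1-x)^{\alpha}$ is bounded above and below; hence $\Jw(x)\asymp(1+x)^{\beta}$ and each condition reduces to a statement about the single power $(1+x)^{\beta}$.

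The structural point is that $\owj[]$ encodes an iterated Hardy chain: integrating $f^{(\ell)}$ once lowers the admissible weight exponent by $p$, which generates the successive powers $(1+x)^{\beta},(1+x)^{\beta-p},(1+x)^{\beta-2p},\dots$ on $f^{(\ell)},f^{(\ell-1)},f^{(\ell-2)},\dots$. The hypothesis $\beta\in I_k$ is precisely the statement $\beta-kp\in(-1,p-1]$, i.e. after exactly $k$ integrations the exponent is still $>-1$ while one more integration would push it to $\le -1$. Consequently the first $k$ derivatives below the top retain genuine (locally integrable) power weights $(1+x)^{\beta-(\ell-j)p}$ for $j=\ell-k,\dots,\ell-1$, whereas from $j=\ell-k-1$ downward the corresponding derivative is already controlled in $\Lp([-1,0])$ and its weight is set to $\mathbb{1}_{[-1,0]}$. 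This reproduces exactly the three-case formula for $\owj$.

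With this reading the verification splits into three blocks. First, $\owj[\ell]=\Jw=w_\ell$, so the top entry is unchanged, as required. Second, the consecutive power entries obey $\owj=(1+x)^{-p}\owj[j+1]$ on $[-1,0]$ for $\ell-k\le j\le\ell-1$, and since each exponent $\delta=\beta-(\ell-j)p$ satisfies $\delta>-1$, the weighted Hardy inequality $\int_{-1}^{0}(1+x)^{\delta}|F|^{p}dx\lesssim\int_{-1}^{0}(1+x)^{\delta+p}|F'|^{p}dx+|F(0)|^{p}$ applies with $F=f^{(j)}$, giving the control demanded between consecutive levels. Third, the $\Bp$-membership required of the completed weights follows from Example \ref{Ex-Bp}: the entries $\mathbb{1}_{[-1,0]}$ lie trivially in $\Bp([-1,0])$, and the exponent bookkeeping $\beta-(\ell-j)p$ together with $\beta\in I_k$ places the surviving power entries in their correct $\Bp$-classes on $(-1,0]$.

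The step that I would treat with most care, and which I expect to be the main obstacle, is the transition from a power weight to $\mathbb{1}_{[-1,0]}$ at the index $j=\ell-k-1$, especially at the right endpoint $\beta=(k+1)p-1$ of $I_k$. There the terminal exponent equals $\beta-kp=p-1$, so $(1+x)^{p-1}$ sits exactly on the $\Bp$-threshold and the tempting Hölder argument, which would place $f^{(\ell-k)}$ in $\Lp[1]$ near $-1$, diverges logarithmically; this is precisely why $I_k$ must be closed on the right and open on the left. The remedy is to work in $\Lp$ rather than $\Lp[1]$: writing $f^{(\ell-k-1)}(x)=f^{(\ell-k-1)}(0)-\int_{x}^{0}f^{(\ell-k)}(t)\,dt$ and using $(1+x)^{p}\le(1+x)^{\beta-kp}$ on $[-1,0]$ (valid because $\beta-kp\le p-1<p$), the Hardy inequality yields $\int_{-1}^{0}|f^{(\ell-k-1)}|^{p}dx\lesssim\int_{-1}^{0}(1+x)^{p}|f^{(\ell-k)}|^{p}dx+|f^{(\ell-k-1)}(0)|^{p}<\infty$, so the replacement by $\mathbb{1}_{[-1,0]}$ is legitimate throughout $I_k$, endpoints included. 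Integrating this control down through the remaining weight-$\mathbb{1}$ levels and matching the resulting estimates to the exact wording of \cite[Definition 5]{RodAlvRomPestI} (together with the easy check that for $p=1$ the terminal exponent is $\le 0$, so no threshold arises) will complete the verification.
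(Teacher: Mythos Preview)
Your conceptual framing—the completion as an iterated Hardy chain, the role of $\beta\in I_k$ as marking the step at which the exponent crosses $-1$, and the reflection argument for the left completion at $1$—is correct and illuminating. But the verification plan misidentifies what Definition~5 in \cite{RodAlvRomPestI} actually requires. The paper's proof checks exactly two things for each $0\le j\le\ell-1$: that $\twj:=\owj-w_j=\owj$ belongs to $\Lp[1]([-1,0])$, and that the Muckenhoupt-type supremum
\[
\Lambda_p(\twj,\owj[j+1])=\sup_{-1<r<0}\Big(\int_{-1}^{r}\twj\Big)\,\big\|\owj[j+1]^{-1}\big\|_{\Lp[1/(p-1)]([r,0])}
\]
is finite. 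Neither of these is a $\Bp$-condition. Your third block, ``$\Bp$-membership required of the completed weights,'' is not part of the completion definition at all; $\Bp$ enters only later, in the separate analysis of $k$-regular points (Proposition~\ref{ProRegSets}). Likewise, your second block phrases the requirement as a Hardy inequality for a function $F=f^{(j)}$ with boundary term $|F(0)|^p$; but Definition~5 is a condition on weights alone, and the quantity to control is $\Lambda_p$, not an estimate on some $f$.

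The paper's proof is a direct case-by-case evaluation of $\int_{-1}^{0}\twj$ and of $\Lambda_p(\twj,\owj[j+1])$, splitting on $p>1$ versus $p=1$ and on whether $j$ lies in the constant block $0\le j\le\ell-k-2$, at the transition index $j=\ell-k-1$, or in the power block $\ell-k\le j\le\ell-1$. At the transition index with $\beta=(k+1)p-1$ the computation produces the logarithmic integral $\sup_{-1<r<0}(r+1)(-\ln(r+1))^{p-1}$, which is finite—so your instinct that this endpoint is the delicate case is right, but it is handled by a direct supremum bound rather than by switching from $\Lp[1]$ to $\Lp$ control of a function. To complete your approach you would need to replace the Hardy-inequality and $\Bp$ paragraphs by explicit bounds on $\Lambda_p$; once you do that, the argument becomes essentially the one in the paper.
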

\begin{proof}
It is straightforward to check $\owj[\ell]=\Jw=w_{\ell}$ and there exists $\epsilon=1$ such that $\owj=0=w_j$ for $x\notin [-1,-1+\epsilon]=[-1,0]$. Then it only remains to prove that $\twj=\owj-w_j=\owj$, $0\leq j\leq \ell-1$, satisfies
\begin{enumerate}
\item $\twj\in \Lp[1]([-1,0]).$
\item $\Lambda_{p}\left(\twj, \owj[j+1]\right)<\infty$ where $\dsty \Lambda_{p}(u, v) :=\sup_{\!-1<r<0}\left(\int_{-1}^{r} u\right)\left\|v^{-1}\right\|_{\Lp[1 /(p-1)]([r, -1+\epsilon])}$.
\end{enumerate}
The first statement comes from
\begin{align*}
 	\int_{-1}^0\twj(x)dx=&\int_{-1}^0\owj(x)dx \\
  = & \funD{1}{j=0,1,\dots,\ell-k-1}{\dsty \int_{-1}^0(1+x)^{\beta-(\ell-j)p}dx}{j=\ell-k,\dots,\ell-1,}<\infty,
\end{align*}
where the integral in the second case is finite because of $\beta-(\ell-j)p\geq\beta-kp>-1$. For the second statement we will split the proof in several cases.

\begin{itemize}[leftmargin=*]
\item Case $p>1$, $0\leq j\leq\ell-k-2$,
\begin{align*}
 \Lambda_{p}(\twj[j],\owj[j+1])=\sup _{-1<r<0}\left(\int_{-1}^{r} dx\right)\left(\int_r^0 dx\right)^{p-1}=\sup _{-1<r<0}(r+1)\left(-r\right)^{p-1}\leq1.
\end{align*}

\item Case $p>1$, $j=\ell-k-1$, $\beta<(k+1)p-1$,
		\begin{align*}
			\Lambda_{p}(\twj[\ell-k-1],\owj[\ell-k])\!\leq &\max\{\!1\!,\!2^{-\alpha}\}\!\sup _{-1<r<0}\left(\int_{-1}^{r} dx\right)\left(\int_r^0(1+x)^{-\frac{\beta-kp}{p-1}}dx\right)^{\!p-1}\\
				=&\frac{\max\{\!1\!,\!2^{-\alpha}\}(p\!-\!1)^{p-1}}{(\!-\beta\!+\!(k\!+\!1)p\!-\!1)^{p-1}} \!\sup _{-1<r<0}(r+1)\!\!\left(\!1\!-\!(1+r)^{\frac{-\beta+(k+1)p-1}{p-1}}\right)^{\!p-1}\\
				\leq&\frac{\max\{\!1\!,\!2^{-\alpha}\}(p-1)^{p-1}}{(\!-\beta\!+\!(k\!+\!1)p\!-\!1)^{p-1}},
		\end{align*}
		where we have used that $-\beta+(k+1)p-1>0$ in the last inequality and also to calculate the second integral.
		
\item Case $p>1$, $j=\ell-k-1$, $\beta=(k+1)p-1$,
		\begin{align*}
			\Lambda_{p}(\twj[\ell-k-1],\owj[\ell-k])\leq & \max\{\!1\!,\!2^{-\alpha}\} \!\sup _{-1<r<0}\left(\int_{-1}^{r} dx\right)\left(\int_r^0(1+x)^{-\frac{\beta-kp}{p-1}}dx\right)^{p-1}\\
				=&  \max\{\!1\!,\!2^{-\alpha}\} \! \sup _{-1<r<0}(r+1)\left(\int_r^0(1+x)^{-1}dx\right)^{p-1}\\ =& \sup _{-1<r<0}(r+1)\left(-\ln(r+1)\right)^{p-1}<\infty,		
		\end{align*}
where we have used that,  by L'H\^{o}pital rule, $\dsty \lim_{r\to -1^+}(1+r)(-\ln(1+r))^{p-1}=0.$

\item Case $p>1$, $\ell-k\leq j\leq \ell-1$,
		\begin{align*}
			\Lambda_{p}(\twj,\owj[j+1])\leq & \max\{\!1\!,\!2^{-\alpha}\} \! \sup _{-1<r<0}\int_{-1}^{r} \owj(x)dx\left(\int_r^0(1-x)^{-\frac{\beta-(\ell-j-1)p}{p-1}}dx\right)^{p-1}\\		%=&\frac{(p-1)^{p-1}}{(\beta\!-\!(\ell\!-\!j)p\!+\!1)^p}\!\sup_{-1<r<0}\!(1+r)^{\beta-(\ell-j)p+1}\!\!\left(\!\!(1+r)^{\!\frac{-\beta+(\ell-j)p-1}{p-1}}\!\!\!-1\!\!\right)^{\!\!p-1}\\	
=&\frac{\max\{\!1\!,\!2^{-\alpha}\} \!(p-1)^{p-1}}{(\beta\!-\!(\ell\!-\!j)p\!+\!1)^p}\sup _{-1<r<0}\left(1-(1+r)^{\frac{\beta-(\ell-j)p+1}{p-1}}\right)^{p-1} \\
= & \frac{\max\{\!1\!,\!2^{-\alpha}\} \!(p-1)^{p-1}}{(\beta\!-\!(\ell\!-\!j)p\!+\!1)^p},
		\end{align*}
		where we have used that $\beta-(\ell-j)p+1>kp-(\ell-j)p\geq0$.
\item Case $p=1$, $0\leq j\leq\ell-k-2$,
\begin{align*}
 \Lambda_{p}(\twj[j],\owj[j+1])=  &  \Lambda_{p}(\owj[j],\owj[j+1])=\sup _{-1<r<0}\left(\int_{-1}^{r} dx\right)\left\|\mathbb{1}_{[r,0]}^{-1}\right\|_{\Lp[\infty]([r, 0])}\\
  = & \sup _{-1<r<0}(r+1)=1.
\end{align*}

\item Case $p=1$, $j=\ell-k-1$,
		\begin{align*}
			\Lambda_{p}(\twj[\ell-k-1],\owj[\ell-k])\leq& \max\{\!1\!,\!2^{-\alpha}\}\sup _{-1<r<0}\left\{\left(\int_{-1}^{r} dx\right)\sup_{r<x<0}(1+x)^{-(\beta-k)}\right\}\\ =&\max\{\!1\!,\!2^{-\alpha}\}\sup _{-1<r<0}(r+1)= \max\{\!1\!,\!2^{-\alpha}\},
		\end{align*}
		where we have used that $-\beta+k\geq-(k+1)p+1+k=0$.

\item Case $p=1$, $\ell-k\leq j\leq \ell-1$,
		\begin{align*}
			\Lambda_{p}(\twj,\owj[j+1])=&\Lambda_{p}(\owj,\owj[j+1])=\sup _{-1<r<0}\left(\int_{-1}^{r} \owj(x)dx\right)\left\|\left(\owj[j+1]\right)^{-1}\right\|_{\Lp[\infty]([r, 0])}\\
			&\leq \max\{\!1\!,\!2^{-\alpha}\} \!\sup _{-1<r<0}\left\{\int_{-1}^{r} \owj(x) dx\sup_{r<x<0}(1+x)^{-(\beta-(\ell-j)+1)}\right\}\\
			&=\frac{\max\{\!1\!,\!2^{-\alpha}\}}{\beta-(\ell-j)+1}\sup _{-1<r<0}\left\{(1+r)^{\beta-(\ell-j)+1}(1+r)^{-(\beta-(\ell-j)+1)}\right\}\\ & =\frac{\max\{\!1\!,\!2^{-\alpha}\}}{\beta-(\ell-j)+1},
		\end{align*}
		where we have used that $\beta-(\ell-j)+1>kp-(\ell-j)=k-(\ell-j)\geq0$.		
\end{itemize}
\end{proof}

\begin{proposition}\label{ProRegSets} Denote by $\Omega^{(k)}$ the set of $k$-regular points of the vectorial weight \eqref{Def-JacWeightVector}  (see \cite[Definition 6]{RodAlvRomPestI}).
	\begin{itemize}
		\item[$\bullet$] If $p>1$, then for $m=1,2,\dots,\ell$;
			\begin{align*}
				\Omega^{(\ell-m)}=\funC{[-1,1]}{\alpha,\beta\in (-1,mp-1)}{(-1,1]}{\alpha\in(-1,mp-1), \beta\in [mp-1,\infty)}{[-1,1)}{\alpha							\in[mp-1,	\infty), \beta\in (-1,mp-1)}{(-1,1)}{\alpha,\beta\in [mp-1,\infty).}
			\end{align*}
		\item[$\bullet$] If $p=1$, then for $m=1,2,\dots,\ell$;
			\begin{align*}
				\Omega^{(\ell-m)}=\funC{[-1,1]}{\alpha,\beta\in(-1,m-1]}{(-1,1]}{\alpha\in(-1,m-1], \beta\in (m-1,\infty)}{[-1,1)}{\alpha\in(m-1,\infty),\beta\in (-1,m-1]}{(-1,1)}{\alpha,\beta\in (m-1,\infty).}
			\end{align*}
	\end{itemize}
	
\end{proposition}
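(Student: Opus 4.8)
The plan is to separate the interior of $[-1,1]$ from its two endpoints, since for each fixed $m\in\{1,\dots,\ell\}$ every interior point is $(\ell-m)$-regular automatically. Indeed, on any compact $K\subset(-1,1)$ the weight $\Jw$ is continuous and bounded away from $0$, so $\Jw\in\Bp(K)$ and the constant completion reproduces $w$ there; by Definition~6 of \cite{RodAlvRomPestI} this already certifies $(\ell-m)$-regularity of each $x_0\in(-1,1)$, giving $(-1,1)\subset\Omega^{(\ell-m)}$ in all cases. Hence the four alternatives in the statement differ only through whether the endpoints $-1$ and $+1$ belong to $\Omega^{(\ell-m)}$, and the whole problem reduces to pinning down the exact thresholds on $\beta$ (at $-1$) and on $\alpha$ (at $+1$).

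For the endpoint $-1$ I would feed the explicit right completion $\owj[]$ built in the preceding proposition into the $\Bp([-1,0])$ criterion of Definition~6. Writing $\gamma_j$ for the exponent with $\owj[j]\sim(1+x)^{\gamma_j}$ near $-1$, the components have nondecreasing exponents and the binding one is $\owj[\ell-m+1]$, which is comparable to $(1+x)^{\beta-(m-1)p}$ (and reduces to the harmless constant weight when $\beta\le(m-1)p$). Applying the power-weight computation of Example~\ref{Ex-Bp} to this component yields $\owj[\ell-m+1]\in\Bp([-1,0])$ exactly when $\beta-(m-1)p<p-1$, i.e. $\beta<mp-1$ for $p>1$, and when $\beta-(m-1)\le0$, i.e. $\beta\le m-1$ for $p=1$; the lower-index components carry smaller exponents and lie in $\Bp([-1,0])$ for free. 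This disposes of the sufficiency direction: for these ranges the constructed completion witnesses $(\ell-m)$-regularity of $-1$, and the open/closed dichotomy is inherited from the strict integrability condition defining $\Bp$ when $p>1$ versus the $\Lp[\infty]$ condition defining $\Bp[1]$.

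The necessity direction is the main obstacle, because it must exclude \emph{every} admissible right completion, not only the one in hand. Here I would argue that the completion of the preceding proposition is extremal for the relevant exponent. Tracing the finiteness of $\Lambda_p(\twj,\owj[j+1])$ through the hard regime $\gamma_{j+1}>p-1$, exactly as in the case-by-case estimates of that proposition, shows that any right completion must satisfy $\gamma_j\ge\gamma_{j+1}-p$ as the index decreases by one; starting from $\owj[\ell]=\Jw\sim(1+x)^{\beta}$ this forces $\gamma_{\ell-m+1}\ge\beta-(m-1)p$ for every completion. Thus $(1+x)^{\beta-(m-1)p}$ is the slowest admissible decay of the $(\ell-m+1)$-st component, and once it already fails $\Bp([-1,0])$---precisely when $\beta\ge mp-1$ for $p>1$ (resp. $\beta>m-1$ for $p=1$)---no completion can meet the regularity condition, so $-1\notin\Omega^{(\ell-m)}$. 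Checking the borderline $\beta=mp-1$, where the extremal exponent equals $p-1$ and the defining $\Bp$ integral $\int_{-1}^0(1+x)^{-1}\,dx$ diverges logarithmically, is what makes the interval open at $-1$ for $p>1$ and is the delicate point of the argument.

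Finally, the endpoint $+1$ is treated symmetrically: one replaces the right completion by the left completion of the preceding proposition and interchanges the roles of $\beta$ and $\alpha$, obtaining $+1\in\Omega^{(\ell-m)}$ iff $\alpha<mp-1$ for $p>1$ (resp. $\alpha\le m-1$ for $p=1$). Combining the two boundary conclusions with the automatic regularity of interior points produces precisely the four-case description of $\Omega^{(\ell-m)}$ in each of the regimes $p>1$ and $p=1$, which completes the proof.
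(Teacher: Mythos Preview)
Your overall strategy matches the paper's: interior points are handled trivially via $\Jw\in\Bp((-1,1))$, the two endpoints are symmetric, and at $-1$ the task splits into sufficiency (exhibit a completion with some $\overline w_j\in\Bp([-1,0])$ for $j>\ell-m$) and necessity (rule out every completion). Your sufficiency argument using the explicit completion of the preceding proposition is essentially what the paper does, though the paper phrases it by selecting the index $j=\ell-k$ or $j=\ell-(k+1)$ according to whether $\beta$ lies in the interior of $I_k$ or at its right endpoint.

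The genuine gap is in your necessity argument. You write that ``any right completion must satisfy $\gamma_j\ge\gamma_{j+1}-p$,'' but an arbitrary completion has no well-defined exponent $\gamma_j$ near $-1$: the components $\overline w_j$ may oscillate, vanish on sets of positive measure, carry logarithmic factors, and so on. The constraint $\Lambda_p(\tilde w_j,\overline w_{j+1})<\infty$ does not by itself force power-type behavior, so the extremality claim as stated is not justified. The paper circumvents this by replacing exponent bookkeeping with an integral lower bound valid for any measurable completion: from the elementary H\"older inequality
\[
\bigl\|w^{-1}\bigr\|_{L^{1/(p-1)}([a,b])}\ \ge\ \frac{(b-a)^p}{\int_a^b w}
\]
it derives, by induction on $k$, that \emph{every} right completion $\overline w$ satisfies
\[
\bigl\|\overline w_{\ell-k}^{\,-1}\bigr\|_{L^{1/(p-1)}(J_{k,s})}\ \ge\ M_k\,s^{-\beta+(k+1)p-1}
\]
on the shrinking intervals $J_{k,s}=[-1+(\ell-k)s,\,-1+(\ell-k+1)s]$. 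The inductive step combines exactly the finiteness of $\Lambda_p(\tilde w_{\ell-k},\overline w_{\ell-k+1})$ with the displayed H\"older inequality. Letting $s\to0$ then shows $\overline w_j\notin\Bp([-1,-1+\epsilon])$ for all $j=\ell-m+1,\dots,\ell$ whenever $\beta\ge mp-1$ (resp.\ $\beta>m-1$ for $p=1$), and the borderline $\beta=mp-1$ falls out of the same estimate rather than needing a separate logarithmic computation. This is the rigorous step your sketch is missing; once you supply it, your argument and the paper's coincide.
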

\begin{proof} From $\Jw\in\Bp((-1,1))$ it follows that $y$ is ($\ell-1$)-regular for every $y\in(-1,1)$ and the analysis of the regularity of $y=1$ and $y=-1$ are analogous. So we will only focus our attention on the point $y=-1$. First, we are going to prove that if $p>1$ and $\beta\in (-1,mp-1)\subset \cup_{i=0}^{m-1}I_{i}$,  where $I_i$ is as in \eqref{Def-Ik}, then $y=-1$ is an ($\ell-m$)-right regular point.

Take $0\leq k\leq m-1$ such that $\beta\in I_{k}$ .
\begin{itemize}[leftmargin=*]
\item Case $\beta\in (kp-1,(k+1)p-1)$,
\begin{align*}
\int_{-1}^0\left(\fowj[\ell-k]{k}\right)^{-\frac{1}{p-1}}dx&=\funD{\int_{-1}^0\left(x+1\right)^{-\frac{\beta-kp}{p-1}}dx}{k\geq 1}{\int_{-1}^0(1-x)^{-\frac{\alpha}{p-1}}(1+x)^{-\frac{\beta}{p-1}}dx}{k=0,}<\infty,
\end{align*}
where both integrals are finite due to  $\frac{\beta-kp}{p-1}<\frac{p-1}{p-1}=1$. Note that $\ell-k\geq\ell-m+1$.
\item Case $\beta=(k+1)p-1$,
\begin{align*}
\int_{-1}^0\left(\owj[\ell-(k+1)]\right)^{-\frac{1}{p-1}}dx =1.
\end{align*}
Note that, as $\beta<mp-1$, we get $k\leq m-2$, so $\ell-(k+1)\geq\ell-m+1.$
\end{itemize}
In both cases, we have proved that there exists $\ell-m<j\leq \ell$ such that $\owj \in\Bp([-1,0])$, so $y=-1$ is $(\ell-m)$-right regular.

Now, consider $p=1$, and $\beta\in (-1,m-1]=\cup_{k=0}^{m-1}I_k$.
Take $0\leq k\leq m-1$ such that $\beta\in I_{k}$.
\begin{align*}
\sup_{-1\leq x\leq 0}\left\{\left(\fowj[\ell-k]{k}(x)\right)^{-1}\right\}&=\funD{\dsty \sup_{-1\leq x\leq 0}\left\{\left(x+1\right)^{-\beta+k}\right\}}{k\geq 1}{\dsty \sup_{-1\leq x\leq 0}\left\{(1-x)^{-\alpha}(1+x)^{-\beta}\right\}}{k=0,}<\infty.
\end{align*}
Note that $\ell-k\geq\ell-m+1$, so we have proved that there exists $\ell-m<j\leq \ell$ such that $\owj\in\Bp([-1,0])$, so $y=-1$ is ($\ell-m$)-right regular.

Now, we are going to prove that in the other case the point $y=-1$ is not ($\ell-m$)-right regular. But first, we will prove the following auxiliary result. If $w$ is a non-negative Lebesgue measurable function, then
\begin{align}\label{AxResult}
\normLp[\frac{1}{p-1}]{[a,b]}{w^{-1}}\geq \frac{(b-a)^{p}}{\int_{a}^bw(x)dx}, \quad a<b.
\end{align}
This inequality comes from Holder inequality
\begin{align*}
(b-a)^p&=\left(\int_{a}^bdx\right)^{p}=\left(\int_{a}^bw^{\frac{1}{p}}w^{-\frac{1}{p}}dx\right)^{p}\leq \left(\normLp{[a,b]}{w^{\frac{1}{p}}}\normLp[\frac{p}{p-1}]{[a,b]}{w^{-\frac{1}{p}}}\right)^p\\
&=\normLp[1]{[a,b]}{w}\normLp[\frac{1}{p-1}]{[a,b]}{w^{-1}}.
\end{align*}
Let $\ow$ be a right completion of $w$ with respect to $y=-1$. Then, we will prove by induction that for $0<s<\frac{\epsilon}{\ell+1}$, (where $\epsilon$  is the given one for $\ow$ in the definition of right completion) we have
\begin{align}\label{Ineq-noReg}
\normLp[\frac{1}{p-1}]{J_{k,s}}{\ow_{\ell-k}^{-1}}\geq M_ks^{-\beta+(k+1)p-1}, \quad k=0,1,\dots,\ell,
\end{align}
where $J_{k,s}=[-1+(\ell-k)s,-1+(\ell-k+1)s]$ and $M_k$ is some positive constant independent of $s$.
For $k=0$ we have
\begin{itemize}
\item Case $p>1$, $\beta\in [mp-1,\infty)$,
		\begin{align*}
			\normLp[\frac{1}{p-1}]{J_{0,s}}{\ow_{\ell}^{-1}}=&\left(\int_{-1+\ell s}^{-1+(\ell+1)s}(1-x)^{-\frac{\alpha}{p-1}}(1+x)^{-\frac{\beta}{p-1}}dx\right)^{p-1}\\
				\geq &\min\!\left\{\!\!\frac{1}{2^{\alpha}},\!\frac{1}{(2-\epsilon)^{\alpha}}\!\!\right\}\left(\int_{-1+\ell s}^{-1+(\ell+1)s}(1+x)^{-\frac{\beta}{p-1}}dx\right)^{p-1}\\=&M_0s^{-\beta+p-1},
		\end{align*}
		where
\begin{align*}
M_0=&\!\funD{\min\!\left\{\!\!\frac{1}{2^{\alpha}},\!\frac{1}{(2-\epsilon)^{\alpha}}\!\!\right\}\frac{(p-1)^{p-1}}{(\beta-p+1)^{p-1}}\!
\left(\ell^{-\frac{\beta}{p-1}}-(\ell+1)^{{-\frac{\beta}{p-1}}}\right)^{p-1}\!\!}{\beta>p-1}{\min\!\left\{\!\!\frac{1}{2^{\alpha}},
\!\frac{1}{(2-\epsilon)^{\alpha}}\!\!\right\}\left[\ln\left(\frac{\ell+1}{\ell}\right)\right]^{p-1}}{\beta=p-1,}>0.
\end{align*}		
\item Case $p=1$, $\beta\in (m-1,\infty)$,
		\begin{align*}
			\normLp[\infty]{J_{0,s}}{\ow_{\ell}^{-1}}&=\sup_{-1+\ell s<x<-1+(\ell+1)s}\left\{(1-x)^{-\alpha}(1+x)^{-\beta}\right\}\\
				&\geq\min\{2^{-\alpha},(2-\epsilon)^{-\alpha}\}\sup_{-1+\ell s<x<-1+(\ell+1)s}\left\{(1+x)^{-\beta}\right\} =M_0s^{-\beta},
		\end{align*}
		where $M_0=\min\{2^{-\alpha},(2-\epsilon)^{-\alpha}\}\ell^{-\beta}$.
\end{itemize}

Now, suppose that \eqref{Ineq-noReg} is true for $k-1$. Then from \eqref{AxResult} it follows
\begin{align*}
\normLp[\frac{1}{p-1}]{J_{k,s}}{\ow_{\ell-k}^{-1}}&\geq \frac{s^p}{\int_{-1+(\ell-k)s}^{-1+(\ell-k+1)s}\ow_{\ell-k}(x)dx}\\
%	&\geq \frac{s^p \normLp[\frac{1}{p-1}]{[-1+(\ell-k+1)s,-1+\epsilon]}{\ow_{\ell-k+1}^{-1}}}{\int_{-1}^{-1+(\ell-k+1)s}\ow_{\ell-k}(x)dx\normLp[\frac{1}{p-1}]{[-1+(\ell-k+1)s,-1+\epsilon]}{\ow_{\ell-k+1}^{-1}}}\\
	&\geq \frac{s^p\normLp[\frac{1}{p-1}]{J_{k-1,s}}{\ow_{\ell-k+1}^{-1}}}{\int_{-1}^{-1+(\ell-k+1)s}\tw_{\ell-k}(x)dx\normLp[\frac{1}{p-1}]{[-1+(\ell-k+1)s,-1+\epsilon]}{\ow_{\ell-k+1}^{-1}}}\\
	&\geq \frac{s^p\normLp[\frac{1}{p-1}]{J_{k-1,s}}{\ow_{\ell-k+1}^{-1}}}{\Lambda_{p}(\tw_{\ell-k},\ow_{\ell-k+1})} \geq \frac{M_{k-1}s^{-\beta+(k+1)p-1}}{\Lambda_{p}(\tw_{\ell-k},\ow_{\ell-k+1})}.
\end{align*}
Taking $M_{k}=\frac{M_{k-1}}{\Lambda_{p}(\tw_{\ell-k},\ow_{\ell-k+1})}>0$ we conclude the proof of \eqref{Ineq-noReg}. Thus we have
\begin{align*}
\normLp[\frac{1}{p-1}]{J_{\ell-j,s}}{\ow_{j}^{-1}}\geq M_{\ell-j}s^{-\beta+(\ell-j+1)p-1}, \quad j=0,1,\dots,\ell,
\end{align*}
and from here it follows
\begin{itemize}[leftmargin=*]
\item Case $p>1$, $\beta\in [mp-1,\infty)$,
\begin{align*}
\lim_{s\to 0}\normLp[\frac{1}{p-1}]{[-1+js,-1+(j+1)s]}{\ow_{j}^{-1}}\neq 0, \quad j=\ell-m+1,\dots,\ell.
\end{align*}
\item Case $p=1$, $\beta\in (m-1,\infty)$,
\begin{align*}
 \lim_{s\to 0}\normLp[\infty]{[-1+(\ell-j)s,-1+(\ell-j+1)s]}{\ow_{j}^{-1}}=\infty, \quad j=\ell-m+1,\dots,\ell.
\end{align*}
\end{itemize}
Thus $\ow_{j}\notin \Bp([-1,-1+\epsilon])$ for $j=\ell-m+1,\dots,\ell$, so $y=-1$ is not $(\ell-m)$-right regular.
 \end{proof}

\begin{proposition}\label{Pro-type2}
Let $\vec{\mu}_{[-1,1]}=(\mu_0,\dots,\mu_{\ell-1},\Jm)$ be a vectorial measure, where
	$$\mu_j=\funD{\delta_{\omega_j}}{\omega_j\in[-1,1]}{0}{\omega_j\notin[-1,1]},\quad j=0,1,\dots,\ell-1.$$
Then $\vec{\mu}_{[-1,1]}$ is a vectorial measure in $[-1,1]$ of type 2 (see \cite[Definition  11]{RodAlvRomPestII}) if and only if 	$ \dsty \omega_{j}\in\Omega^{(j)}$, for every $j$ such that $\omega_j\in[-1,1].$

Recall that $\Omega^{(k)}$ denotes the set of $k$-regular points of the vectorial weight \eqref{Def-JacWeightVector} given explicitly in Proposition \ref{ProRegSets}.
\end{proposition}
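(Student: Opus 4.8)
The plan is to unwind the definition of a type 2 vectorial measure from \cite[Definition 11]{RodAlvRomPestII} for the very specific measure $\vec{\mu}_{[-1,1]}$, and to verify that all of its requirements except those pinning down the location of the atoms are automatically fulfilled by the structure established in the two previous propositions of this section. Concretely, I would first record the decomposition of $\vec{\mu}_{[-1,1]}$: its absolutely continuous part is concentrated entirely in the last component, with weight $\Jw$, while each component $j=0,1,\dots,\ell-1$ is purely atomic, being either $\delta_{\omega_j}$ (when $\omega_j\in[-1,1]$) or the zero measure (when $\omega_j\notin[-1,1]$).

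Since $\Jw\in\Bp((-1,1))$ by Example \ref{Ex-Bp}, the continuous part fulfils the interior $\Bp$-requirement of the definition, and the behaviour at the endpoints $\pm1$ is governed exactly by the right and left completions constructed in the preceding proposition. Thus the only clauses of Definition 11 that are not immediately satisfied are those demanding that each atom sit at a point that is regular of the appropriate order. I would then match the order of regularity required at each atom with the index $j$ of its component: the atom $\delta_{\omega_j}$ in the $j$-th component corresponds, in the Sobolev norm \eqref{NorSobp}, to the point functional $f\mapsto f^{(j)}(\omega_j)$, and the type 2 condition forces $\omega_j$ to be $j$-regular for the vectorial weight \eqref{Def-JacWeightVector}, i.e. $\omega_j\in\Omega^{(j)}$. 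Reading off the regularity order demanded by Definition 11 at a mass point of $\mu_j$ and checking it coincides with the notion of $j$-regularity recalled in Proposition \ref{ProRegSets} yields the forward implication: if $\vec{\mu}_{[-1,1]}$ is of type 2, then every atom $\omega_j\in[-1,1]$ is $j$-regular, so $\omega_j\in\Omega^{(j)}$.

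For the converse I would reassemble the clauses under the hypothesis $\omega_j\in\Omega^{(j)}$ for every $j$ with $\omega_j\in[-1,1]$: the interior $\Bp$-condition and the endpoint completions hold as above, while the regularity hypothesis supplies exactly the remaining atomic conditions, so $\vec{\mu}_{[-1,1]}$ is of type 2. The main obstacle is the bookkeeping. The definition of type 2 is stated for a general vectorial measure and involves completions at both endpoints together with a regularity requirement at every point carrying mass in some component; the work lies in showing that, for this degenerate measure whose continuous part lives only in the top component, each such clause either collapses to a triviality handled by $\Jw\in\Bp((-1,1))$ and the completions already constructed, or reduces precisely to the single requirement $\omega_j\in\Omega^{(j)}$. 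Extra care is needed when an atom coincides with an endpoint $\pm1$, where the one-sided regularity made explicit in Proposition \ref{ProRegSets} must be invoked rather than two-sided regularity.
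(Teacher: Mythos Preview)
Your plan correctly identifies that the condition $\omega_j\in\Omega^{(j)}$ corresponds to the atom-regularity clause (strong $p$-admissibility) in the definition of type 2, and that finiteness is automatic. But there is a genuine gap: you assert that ``the only clauses of Definition 11 that are not immediately satisfied are those demanding that each atom sit at a point that is regular of the appropriate order,'' and that the endpoint behaviour is handled by the completions of the preceding proposition. This is not what the type 2 definition requires. Beyond strong $p$-admissibility and finiteness, Definition 11 in \cite{RodAlvRomPestII} asks for points $-1\leq a_1<a_2<a_3<a_4\leq 1$ such that $w_\ell\in\Bp([a_1,a_4])$ and, whenever an endpoint is excluded, each weight $w_j$ is comparable to a \emph{monotone} weight on the corresponding boundary interval. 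The completions you invoke are a device used to \emph{define} the regular sets $\Omega^{(j)}$ in Proposition \ref{ProRegSets}; they do not by themselves deliver this monotonicity/comparability requirement.

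The paper's proof spends its main effort precisely here: it computes $w_\ell'(x)=(1-x)^{\alpha-1}(1+x)^{\beta-1}(\beta-\alpha-(\alpha+\beta)x)$, observes that this has a fixed sign on small one-sided neighbourhoods of $\pm1$ (depending on the signs of $\alpha$ and $\beta$), and then exhibits explicit choices of $a_1,a_2,a_3,a_4$ case by case according to whether $\alpha,\beta$ lie in the ranges of Example \ref{Ex-Bp}. The weights $w_0,\dots,w_{\ell-1}$ are identically zero, so their comparability is trivial; the content is entirely in $w_\ell=\Jw$. Your outline needs to incorporate this verification; without it the ``if'' direction is incomplete.
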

\begin{proof}
The condition $\omega_{j}\in\Omega^{(j)}$ is just the condition in the definition of strongly $p$-admissible vectorial measure (see \cite[Definition 8]{RodAlvRomPestI}) and $\vec{\mu}_{[-1,1]}$ is finite (each component measure is finite). Then, we only have to check that there exist real numbers $-1\leq a_1<a_2<a_3<a_4\leq 1$ such that
\begin{enumerate}
\item $w_\ell\in\Bp([a_1,a_4])$,
\item if $-1<a_1$, then $w_j$ is comparable (see \cite[Definition 1]{RodAlvRomPestII}) to a non-decreasing weight in $[-1,a_2]$, for $0\leq j \leq \ell$,
\item if $a_4<1$, then $w_j$ is comparable (see \cite[Definition 1]{RodAlvRomPestII}) to a non-decreasing weight in $[a_3,1]$, for $0\leq j \leq \ell$,
\end{enumerate}
where $d\mu_j=d(\mu_j)_s+w_jdx$, $j=0,1,\cdots,\ell$ and $(\mu_j)_s$ is singular with respect to Lebesgue measure. Therefore, we obtain $w_j\equiv 0$, $j=0,1,\dots,\ell-1$;  and $w_{\ell}(x)=\Jw(x)$. Since $w^{\prime}_{\ell}(x)=(1-x)^{\beta-1}(1+x)^{\alpha-1}(\beta-\alpha-(\beta+\alpha)x)$ it follows that there exists $0<\epsilon<1$ (depending on $\alpha$ and $\beta$) such that
$$w^{\prime}(x)  \funD{>0\quad \text{ for }x\in(-1,-1+\epsilon]}{\beta>0}{<0 \quad \text{ for } x\in[1-\epsilon,1)}{\alpha>0.}$$
Thus, from Example \ref{Ex-Bp}, the three conditions are easy verified taking $a_2=-1+\epsilon$,  $a_3=1-\epsilon$ and
			\begin{align*}
				\text{$\bullet$ Case $p>1$ } \;& \funC{a_1=-1,a_4=1}{\alpha,\beta\in (-1,p-1)}{a_1=-1+\frac{\epsilon}{2},a_4=1}{\alpha\in(-1,p-1), \beta\in [p-1,\infty)}{a_1=-1,a_4=1-\frac{\epsilon}{2}}{\alpha	\in[p-1,	\infty), \beta\in (-1,p-1)}{a_1=-1+\frac{\epsilon}{2},a_4=1-\frac{\epsilon}{2}}{\alpha,\beta\in [p-1,\infty).}\\
			\text{$\bullet$ Case $p=1$ } \;	& \funC{a_1=-1,a_4=1}{\alpha,\beta \in(-1,0]}{a_1=-1+\frac{\epsilon}{2},a_4=1}{\alpha\in(-1,0], \beta\in (0,\infty)}{a_1=-1,a_4=1-\frac{\epsilon}{2}}{\alpha\in(0,\infty),\beta\in (-1,0]}{a_1=-1+\frac{\epsilon}{2},a_4=1-\frac{\epsilon}{2}}{\alpha,\beta\in (0,\infty).}
			\end{align*}
 \end{proof}

\begin{proposition}
	If $m\in\ZZp$, then there exist $m+1$ positive real numbers $\{\lambda_{m,k}\}_{k=0}^m $ such that
	\begin{align} \label{For-DerLn}
		\left[\frac{1}{(1\pm x)\ln(1\pm x)}\right]^{(m)}=\frac{(\mp1)^{m}}{(1\pm x)^{m+1}\ln(1\pm x)}\sum_{k=0}^{m}\frac{\lambda_{m,k}}{\ln^{k}(1\pm x)}.
	\end{align}
\end{proposition}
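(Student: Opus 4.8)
The plan is to prove \eqref{For-DerLn} by induction on $m$, treating both sign choices simultaneously. The key simplification is to substitute $u=1\pm x$, so that by the chain rule $\frac{d}{dx}=\pm\frac{d}{du}$; each differentiation then contributes exactly one factor of $\pm 1$, which accounts for the prefactor $(\mp1)^m$ in \eqref{For-DerLn} once we note that $\pm(-1)=\mp$. Rewriting the right-hand side of \eqref{For-DerLn} in the form $(\mp1)^m\sum_{k=0}^m\lambda_{m,k}\,u^{-(m+1)}(\ln u)^{-(k+1)}$ reduces the entire problem to differentiating a single elementary term $u^{-(m+1)}(\ln u)^{-j}$ and keeping track of how the resulting powers of $\ln u$ reassemble.

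For the base case $m=0$ the claimed identity reads $\frac{1}{(1\pm x)\ln(1\pm x)}=\frac{\lambda_{0,0}}{(1\pm x)\ln(1\pm x)}$, so it holds with $\lambda_{0,0}=1>0$. For the inductive step I would first record, for a fixed $j\geq 1$,
\begin{align*}
\frac{d}{dx}\left[u^{-(m+1)}(\ln u)^{-j}\right]=\mp\,u^{-(m+2)}\left[(m+1)(\ln u)^{-j}+j(\ln u)^{-j-1}\right],
\end{align*}
and then apply this term by term to the inductive expression for the $m$-th derivative. Collecting the outcome by powers of $\ln u$ (matching the coefficient of $u^{-(m+2)}(\ln u)^{-(k+1)}$ on both sides) produces the $(m+1)$-st derivative in exactly the shape \eqref{For-DerLn}, with new coefficients governed by the recurrence
\begin{align*}
\lambda_{m+1,k}=(m+1)\,\lambda_{m,k}+k\,\lambda_{m,k-1},\qquad k=0,1,\dots,m+1,
\end{align*}
under the boundary conventions $\lambda_{m,-1}=\lambda_{m,m+1}=0$.

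It then remains only to read off positivity. Since $m+1>0$ and $k\geq 0$, and since by the inductive hypothesis every $\lambda_{m,k}$ is positive, each $\lambda_{m+1,k}$ is a sum of nonnegative terms of which at least one is strictly positive: the term $(m+1)\lambda_{m,k}$ for $0\leq k\leq m$, and the term $k\,\lambda_{m,k-1}=(m+1)\lambda_{m,m}$ for $k=m+1$. Hence all $m+2$ of the new coefficients are strictly positive, which closes the induction and yields the asserted family $\{\lambda_{m,k}\}_{k=0}^m$ of $m+1$ positive reals.

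I expect no conceptual obstacle here; the only delicate point is the index bookkeeping in the inductive step, namely verifying that the two contributions of the single-term derivative formula land on the powers $(\ln u)^{-(k+1)}$ and $(\ln u)^{-(k+2)}$ respectively, so that after re-indexing the powers $(\ln u)^{-1},\dots,(\ln u)^{-(m+2)}$ emerge with the stated recurrence, together with keeping the $(\mp1)^m$ sign consistent across the two choices of sign (which, as noted, is automatic once the chain rule is applied through $u=1\pm x$).
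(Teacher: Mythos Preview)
Your proposal is correct and follows essentially the same route as the paper: induction on $m$, term-by-term differentiation of the inductive expression, and extraction of the recurrence $\lambda_{m+1,k}=(m+1)\lambda_{m,k}+k\lambda_{m,k-1}$ with the conventions $\lambda_{m,-1}=\lambda_{m,m+1}=0$. The only differences are cosmetic: you package the two sign cases via the substitution $u=1\pm x$, and you spell out the positivity verification that the paper leaves implicit in the recurrence.
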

\begin{proof}
	We are going to prove it by induction on $m$. The case $m=0$ is trivial. Suppose the formula \eqref{For-DerLn} holds for $m=n$ and we are going to prove it for $m=n+1$.
	\begin{align*}
		\left(\frac{1}{(1\pm x)\ln(1\pm x)}\right)^{\!\!(n+1)}\!=& \left(\frac{(\mp1)^n}{(1\pm x)^{n+1}\ln(1\pm x)}\sum_{k=0}^{n}\frac{\lambda_{n,k}}{\ln^{k}(1\pm x)}\right)^{\prime}\\
			=& (\mp1)^n\sum_{k=0}^{n}\left(\frac{\lambda_{n,k}}{(1\pm x)^{n+1}\ln^{k+1}(1\pm x)}\right)^{\prime}\\
			=& \frac{(\mp1)^{n+1}}{(1\pm x)^{n+2}\ln(1\pm x)}\sum_{k=0}^{n+1}\frac{(n+1)\lambda_{n,k}+k\lambda_{n,k-1}}{\ln^{k}(1\pm x)},
	\end{align*}
	where $\lambda_{n,-1}=\lambda_{n,n+1}=0$. Now the proof is completed taking  $\lambda_{n+1,k}=(n+1)\lambda_{n,k}+k\lambda_{n,k-1},$ for $ k=0,1,\dots ,n+1.$
 \end{proof}

\begin{proposition}\label{Pro-LpDifNoNull}
	Let $p\geq 1$, $m\in\NN$, $\beta>-1$, $a\in(0,1)$ and
		$$\alpha\funD{\geq mp-1}{p>1}{> mp-1}{p=1,}$$
	then  the functions
	\begin{align*}
		\phi_m(x)&=\funD{\left[\ln\left(\ln\left(\frac{1}{1-x}\right)\right)\right]^{(m)}}{\alpha=mp-1}{\left[\ln\left(\frac{1}{1-x}\right)\right]^{(m)}}{\alpha>mp-1,}\quad x\in[a,1),\\
		\phi_{-m}(x)&=\funD{\left[\ln\left(\ln\left(\frac{1}{1+x}\right)\right)\right]^{(m)}}{\alpha=mp-1}{\left[\ln\left(\frac{1}{1+x}\right)\right]^{(m)}}{\alpha>mp-1,}\quad x\in(-1,-a],
	\end{align*}
	are continuous, satisfy $\mathbb{1}_{[a,1)}\phi_m\in\Lp(\Jm)$, $\mathbb{1}_{(-1,-a]}\phi_{-m}\in\Lp(\Jm)$ and
	\begin{align*}
		\lim_{x\to \pm 1}\int_{\pm a}^x\int_{\pm a}^{x_1}\cdots\int_{\pm a}^{x_{m-1}}\phi_{\pm m}(x_{m})dx_{m}dx_{m-1}\cdots dx_1=\infty
	\end{align*}
\end{proposition}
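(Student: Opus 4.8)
The plan is to reduce every assertion to the local behaviour of $\phi_{\pm m}$ at the endpoints $\pm 1$. On any compact subinterval of $[a,1)$ the function $\phi_m$ is a finite composition and derivative of the smooth map $x\mapsto -\ln(1-x)$, which is strictly positive there precisely because $a\in(0,1)$ guarantees $-\ln(1-x)>0$; hence continuity is immediate and only the boundary point $+1$ can affect the $\Lp(\Jm)$-membership and the divergence of the iterated integral. I will carry out the argument at $+1$ (governed by $\alpha$) in full, and the endpoint $-1$ follows by the symmetric argument obtained from the reflection $x\mapsto -x$, with $1-x$ replaced by $1+x$.

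First I would record a closed form for $\phi_m$. When $\alpha>mp-1$, writing $g(x)=-\ln(1-x)$ one checks by induction that $g^{(m)}(x)=(m-1)!\,(1-x)^{-m}$, so $\phi_m(x)=(m-1)!\,(1-x)^{-m}$. In the borderline case $\alpha=mp-1$ (which forces $p>1$, since $p=1$ requires the strict inequality), set $h(x)=\ln\big(-\ln(1-x)\big)$; then $h'(x)=-\big[(1-x)\ln(1-x)\big]^{-1}$, so $\phi_m=h^{(m)}=-\big[((1-x)\ln(1-x))^{-1}\big]^{(m-1)}$, and applying the differentiation formula \eqref{For-DerLn} (lower sign, order $m-1$) gives
\[
\phi_m(x)=-\frac{1}{(1-x)^{m}\ln(1-x)}\sum_{k=0}^{m-1}\frac{\lambda_{m-1,k}}{\ln^{k}(1-x)}.
\]
As $x\to 1^-$ the $k=0$ term dominates, so $|\phi_m(x)|\sim \lambda_{m-1,0}\,(1-x)^{-m}\,|\ln(1-x)|^{-1}$.

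For the membership in $\Lp(\Jm)$, since $(1+x)^{\beta}$ is bounded near $+1$, it suffices to control $\int_a^1|\phi_m(x)|^{p}(1-x)^{\alpha}\,dx$. In the case $\alpha>mp-1$ this is a constant times $\int_a^1(1-x)^{\alpha-mp}\,dx$, finite exactly because $\alpha-mp>-1$. In the borderline case, substituting $u=1-x$ and then $v=-\ln u$ turns the integral into a constant multiple of $\int_{-\ln(1-a)}^{\infty}v^{-p}\,dv$, which converges precisely because $p>1$. I expect this to be the crux of the proof: it is exactly the step where the logarithmic gain built into the double logarithm, combined with the strict inequality $p>1$ that is available in this regime, rescues the otherwise critical exponent.

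Finally, for the divergence of the iterated integral I use that $\phi_m=h^{(m)}$ with $h(x)=-\ln(1-x)$ or $h(x)=\ln(-\ln(1-x))$ according to the case. Iterating the fundamental theorem of calculus $m$ times from the base point $a$ (Cauchy's formula for repeated integration) yields
\[
\int_{a}^{x}\!\!\int_{a}^{x_1}\!\!\cdots\!\int_{a}^{x_{m-1}}\phi_m(x_m)\,dx_m\cdots dx_1
= h(x)-\sum_{j=0}^{m-1}\frac{h^{(j)}(a)}{j!}\,(x-a)^{j}.
\]
The subtracted sum is a polynomial, hence bounded as $x\to 1^-$, while $h(x)\to+\infty$ in both sub-cases; therefore the limit is $+\infty$. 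The endpoint $-1$ is handled by the symmetric argument with $1+x$ in place of $1-x$, which completes the proof.
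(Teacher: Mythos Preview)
Your proof is correct and follows essentially the same route as the paper: both invoke \eqref{For-DerLn} to obtain the explicit form of $\phi_m$, split the $L^p$-membership into the strict case $\alpha>mp-1$ and the borderline case $\alpha=mp-1$, and in the latter reduce the integral via the substitution $y=-\ln(1-x)$ to one comparable with $\int y^{-p}\,dy$, convergent since $p>1$. The paper declares the divergence of the iterated integral ``straightforward'' without further comment, while you supply the explicit Taylor-remainder identity $\int_a^x\cdots\int_a^{x_{m-1}}h^{(m)}=h(x)-\sum_{j<m}h^{(j)}(a)(x-a)^j/j!$; this is a helpful clarification but not a different argument.
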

\begin{proof}
	The first and the last statements are straightforward and the proofs of $\mathbb{1}_{[a,1)}\phi_m\in\Lp(\Jm)$ and $\mathbb{1}_{(-1,-a]}\phi_{-m}\in\Lp(\Jm)$ are analogous, so we only going to prove  $\mathbb{1}_{[a,1)}\phi_m\in\Lp(\Jm)$.
Using \eqref{For-DerLn} we obtain
\begin{itemize}[leftmargin=*]
\item Case $\alpha=mp-1$, $p>1$,
		\begin{align*}
			\normLp{\!\Jm}{\mathbb{1}_{[a,1)}\phi_m} ^p\!\!=\!&\int_{a}^1\left|\left[\ln\left(\ln\left(\frac{1}{1-x}\right)\right)\right]^{(m)}\right|^p(1-x)^{mp-1}(1+x)^{\beta}dx\\
			%	=&\int_{a}^1\left|\left[\frac{-1}{(1-x)\ln\left(1-x\right)}\right]^{(m-1)}\right|^p(1-x)^{mp-1}(1+x)^{\beta}dx\\
				=&\int_{a}^1\left|\frac{1}{(1-x)^{m}\ln(1-x)}\sum_{k=0}^{m-1}\frac{\lambda_{m-1,k}}{\ln^{k}(1-x)}\right|^p(1-x)^{mp-1}(1+x)^{\beta}dx\\
\leq &\max\{\!(1\!+\!a)^\beta\!,2^{\beta}\}\! \int_{a}^1\!\!\!\frac{1}{\ln^{p}\!\!\left(\!\frac{1}{1-x}\right)}\!\left| \sum_{k=0}^{m-1}\frac{(-1)^{k}\lambda_{m-1,k}}{\ln^{k}\left(\!\frac{1}{1-x}\right)}\right|^p\!\!d\ln\!\left(\!\!\frac{1}{1-x}\!\right)\\
\leq &\max\{\!(1\!+\!a)^\beta\!,2^{\beta}\}\!\int_{\ln\left(\frac{1}{1-a}\right)}^{\infty}\frac{1}{y^p}\left|\sum_{k=0}^{m-1}\frac{(-1)^{k}\lambda_{m-1,k}}{y^{k}}\right|^p\!dy<\infty,
		\end{align*}
		where the last integral is convergent by the limit comparison test
		 $$\lim_{y\to\infty}\frac{\frac{1}{y^p}\left|\sum_{k=0}^{m-1}\frac{(-1)^{k}\lambda_{m-1,k}}{y^{k}}\right|^p}{\frac{1}{y^p}}=\lambda_{m-1,0}^p>0.$$
	
\item Case $\alpha>mp-1$, $p\geq 1$,
		\begin{align*}
			\normLp{\!\Jm}{\mathbb{1}_{[a,1)}\phi_m} ^p\!\!=&\int_{a}^1\left|\left[\ln\left(\frac{1}{1-x}\right)\right]^{(m)}\right|^p(1-x)^{\alpha}(1+x)^{\beta}dx\\
\leq &  \frac{\max\{(1+a)^\beta,2^{\beta}\}}{\alpha-mp+1}\left(1-a\right)^{\alpha-mp+1}.
		\end{align*}
\end{itemize}
 \end{proof}

We are ready to prove the completeness of the spaces $\UUp$.
\begin{theorem} \label{Th-Completeness}
Let $p\geq 1$, $\alpha,\beta>-1$. Then $\UUp$ is a complete space if and only if one of the following statements holds
	\begin{enumerate}
	    \item[$\bullet$] $\ell=1$.
		\item[$\bullet$]  $\ell\geq 2$, $p>1$ and for $m=1,\dots,\ell-1$,
			\begin{align*}
				\omega_{\ell-m}\in\funC{\RR}{\alpha,\beta\in (-1,mp-1)}{\RR\setminus\{-1\}}{\alpha\in(-1,mp-1), \beta\in [mp-1,\infty)}{\RR\setminus\{1\}}{\alpha							\in[mp-1,	\infty), \beta\in (-1,mp-1)}{\RR\setminus\{-1,1\}}{\alpha,\beta\in [mp-1,\infty).}
			\end{align*}
		\item[$\bullet$]  $\ell\geq 2$, $p=1$,  and for $m=1,\dots,\ell-1; $
			\begin{align*}
				\omega_{\ell-m}\in\funC{\RR}{\alpha,\beta \in(-1,m-1]}{\RR\setminus\{-1\}}{\alpha\in(-1,m-1], \beta\in (m-1,\infty)}{\RR\setminus\{1\}}{\alpha\in(m-1,\infty),\beta\in (-1,m-1]}{\RR\setminus\{-1,1\}}{\alpha,\beta\in (m-1,\infty).}
			\end{align*}
	\end{enumerate}
	In addition, in the affirmative case we obtain $\UUp=\WWp$.
\end{theorem}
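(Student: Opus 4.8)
The plan is to prove the two implications separately, and the organizing idea is that the stated conditions encode an asymmetry between \emph{endpoint values of order $0$} and \emph{endpoint values of derivatives of order $\ge 1$}. First I would record the translation that makes the whole argument run: setting $m=\ell-j$, the four-region description of $\omega_{\ell-m}$ in the statement is \emph{verbatim} the description of the regularity set $\Omega^{(j)}$ in Proposition \ref{ProRegSets}. Thus the hypotheses assert precisely that $\omega_j\in\Omega^{(j)}$ for $j=1,\dots,\ell-1$ whenever $\omega_j\in[-1,1]$ (and impose nothing when $\omega_j\notin[-1,1]$, where $\delta_{\omega_j}$ sits off the support and only contributes a bounded evaluation of a smooth function). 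Since $\Jw\in\Bp((-1,1))$, every interior point is $(\ell-1)$-regular and hence $j$-regular for all $j\le\ell-1$, so the only constraints that bite are located at $\pm1$.

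For sufficiency, suppose the conditions hold. If moreover $\omega_0\in\Omega^{(0)}$, then all of $\omega_0,\dots,\omega_{\ell-1}$ are regular of the requisite order, so by Proposition \ref{Pro-type2} the vectorial measure $\vec{\mu}_{[-1,1]}$ is of type $2$, and completeness of $\UUp$ follows from \cite[Th.\ 5.1]{RodAlvRomPestI}. The only remaining subcase is $\omega_0\in\{-1,1\}$ with $\omega_0\notin\Omega^{(0)}$; here the order-$0$ term of the norm is a plain point value $f(\omega_0)$, \emph{not} a derivative, so by condition (1) in the definition of $\uup$ it is not pinned down by the behaviour of $f$ on $(-1,1)$. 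I would therefore apply \cite[Th.\ 5.1]{RodAlvRomPestI} to the type-$2$ data of orders $1$ through $\ell$ (the same regularity check), obtaining completeness there, and then reinstate the free endpoint value: given a Cauchy sequence of polynomials, $\{P_n(\omega_0)\}$ converges in $\RR$, and one simply assigns the limit function that value at the endpoint, which keeps it in $\UUp$. The case $\ell=1$ is the degenerate instance with no intermediate derivative evaluations, so completeness is immediate.

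For necessity, assume some stated condition fails; then for some $m\in\{1,\dots,\ell-1\}$ one has $\omega_{\ell-m}=1$ with $\alpha$ in the range of Proposition \ref{Pro-LpDifNoNull} (or, symmetrically, $\omega_{\ell-m}=-1$ with the corresponding condition on $\beta$). I would build a Cauchy sequence in $(\PP,\normSp{\cdot})$ whose limit escapes $\UUp$. Take $g=\phi_m$ from Proposition \ref{Pro-LpDifNoNull}, which lies in $\JLp$ yet whose $m$-fold iterated integral diverges at $1$, and approximate it in $\JLp$ by polynomials $q_n$. Using the Abel-Goncharov construction of Proposition \ref{PropDensePoly}, form polynomials $P_n$ with $P_n^{(\ell)}=q_n$ and $P_n^{(k)}(\omega_k)=0$ for every $k<\ell$ (integration of the \emph{polynomial} $q_n$ from the base point $\omega_{\ell-m}=1$ causes no trouble). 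Then $\normSp{P_n-P_{n'}}=\normJLp{q_n-q_{n'}}\to0$, so $\{P_n\}$ is Cauchy, whereas any limit $f\in\UUp$ would satisfy $f^{(\ell)}=\phi_m$ together with finiteness of $f^{(\ell-m)}(1)$ required by condition (1) of $\uup$; but that finite boundary value is exactly the convergence at $1$ of the $m$-fold iterated integral of $\phi_m$, contradicting Proposition \ref{Pro-LpDifNoNull}. Hence $\UUp\subsetneq\WWp$. This construction is unavailable for $m=\ell$, i.e.\ at $\omega_0$, because there the offending quantity is the order-$0$ value, which is free; this is precisely why the characterization places no condition on $\omega_0$.

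In every complete case the identity $\UUp=\WWp$ is then automatic from Proposition \ref{PropDensePoly}: the polynomials are dense in $\UUp$, so a complete $\UUp$ is itself the completion of $(\PP,\normSp{\cdot})$. I expect the main obstacle to be the necessity direction, specifically verifying rigorously that the Cauchy limit genuinely leaves $\UUp$ for every admissible $m$ and at both endpoints, which relies on the sharp $\JLp$-membership and divergence properties of $\phi_{\pm m}$ established in Proposition \ref{Pro-LpDifNoNull}, together with the bookkeeping needed to match each of the four parameter-regions to the correct regularity set $\Omega^{(j)}$.
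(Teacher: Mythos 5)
Your proposal follows essentially the same route as the paper's proof: the same translation of the four-region conditions into $\omega_j\in\Omega^{(j)}$ via Proposition \ref{ProRegSets}, the same reduction of sufficiency to \cite[Th. 5.1]{RodAlvRomPestI} through the type-2 criterion of Proposition \ref{Pro-type2}, and the same $\phi_m$-based obstruction from Proposition \ref{Pro-LpDifNoNull} for necessity. The variations are harmless. Where you ``drop and reinstate'' the order-zero mass when $\omega_0\in\{-1,1\}\setminus\Omega^{(0)}$, the paper instead invokes \cite[Remark 4 in Definition 16]{RodAlvRomPestI}; where you take a Cauchy sequence of polynomials $P_n$ with $P_n^{(\ell)}=q_n\to\mathbb{1}_{[a,1)}\phi_m$ in $\JLp$, the paper uses the truncated iterated integrals of $\mathbb{1}_{[a,a_n]}\phi_m$, which lie in $\UUp$ but are not polynomials --- either sequence does the job. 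One writing point on sufficiency: treating $f(\omega_0)$ at an endpoint, and the evaluations at $\omega_j\notin[-1,1]$, as ``free coordinates'' must be realized by an actual function on $\RR$; the paper does this by gluing the limit $\tilde f$ on $[-1,1]$ with the Abel--Goncharov polynomial $\mathcal{A}_{\varpi,f(\varpi)}$ outside, which is legitimate precisely because the definition of $\uup$ requires differentiability only on $(-1,1)$ and derivatives of order $k\geq 1$ at the $\omega_k$; your sketch gestures at this but does not construct it.

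There is, however, one genuine gap, and it sits exactly where the paper works hardest. In your necessity argument you assert that the finiteness of $f^{(\ell-m)}(1)$ ``is exactly the convergence at $1$ of the $m$-fold iterated integral of $\phi_m$.'' That identification is false in general: the existence of the derivative of order $\ell-m$ at the point $1$ is a statement about difference quotients of $f^{(\ell-m-1)}$, and a derivative can exist at a point without $\lim_{x\to 1^-}f^{(\ell-m)}(x)$ existing (derivatives need not be continuous, nor convergent, at a point where they exist). So the divergence $\lim_{x\to 1^-}f^{(\ell-m)}(x)=\infty$ supplied by Proposition \ref{Pro-LpDifNoNull} does not by itself contradict condition (1) in the definition of $\uup$; an additional argument is required. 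The paper bridges this with Darboux's theorem \cite[Th. 5.12]{Rud76}: since $m\leq\ell-1$, the function $f^{(\ell-m)}$ is a genuine derivative on the closed interval $[a,1]$, hence $f^{(\ell-m)}([1-\epsilon,1])$ is an interval; for small $\epsilon$ this image consists of arbitrarily large values together with the single finite value $f^{(\ell-m)}(1)$, so it omits intermediate values --- a contradiction. (Equivalently, apply the mean value theorem to $f^{(\ell-m-1)}$ on $[x,1]$ to see that the difference quotients at $1$ blow up.) With this step inserted your proof closes; without it, the final contradiction is unjustified.
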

Note that there is not restrictions over $\omega_0$ in any case.
\begin{proof}
Let $\{f_n\}_{n\geq0}\subset\UUp$ be a Cauchy sequence. From Proposition \ref{PropDensePoly} there exists a sequence of polynomials $\{P_n\}_{n\geq0}$ such that $\normSp{P_n-f_n}<\frac{1}{n}.$

Consider the space $V^{k,p}([-1,1],\vec{\mu}_{[-1,1]})$ defined in \cite[Definition 9]{RodAlvRomPestII}, where the measure $\vec{\mu}_{[-1,1]}$ is as in Proposition \ref{Pro-type2} and whose norm is given by
\begin{align*}
\qquad	\|f\|_{\tilde{\sob},p}= \left(\sum_{\substack{k=0\\\omega_k\in[-1,1]}}^{\ell-1}\left|f^{(k)}(\omega_{k})\right|^{p} + \int_{-1}^{1} \left|f^{(\ell)}(x)\right|^{p} d\Jm(x)\right)^{\frac{1}{p}}.
\end{align*}
From Proposition \ref{Pro-type2}, \cite[Remark 4 in Definition 16]{RodAlvRomPestI} and \cite[Theorem 5.1]{RodAlvRomPestI} this space is complete. So, as the restriction of $P_n$ to $[-1,1]$ belongs to $V^{k,p}([-1,1],\vec{\mu}_{[-1,1]})$ and
$$\|P_{n_1}-P_{n_2}\|_{\tilde{\sob},p}\leq \normSp{P_{n_1}-P_{n_2}}\leq \frac{1}{n_1}+\normSp{f_{n_1}-f_{n_2}}+\frac{1}{n_2}, $$
there exists a function $\tilde{f}\in V^{k,p}([-1,1],\vec{\mu}_{[-1,1]})$ such that
\begin{align}\label{For-CompleProof}
\|P_n-\tilde{f}\|_{\tilde{\sob},p}\longrightarrow 0.
\end{align}
On the other hand, since
\begin{align*}
\left|P_{n_1}^{(j)}(\omega_{j})-P_{n_2}^{(j)}(\omega_{j})\right|^p\leq &\sum_{k=0}^{\ell-1} \left|P_{n_1}^{(k)}(\omega_{k})-P_{n_2}^{(k)}(\omega_{k})\right|^p  \\ & + \int_{-1}^{1} \left|P_{n_1}^{(\ell)}(x)-P_{n_2}^{(\ell)}(x)\right|^p d\Jm(x)= \normSp{P_{n_1}-P_{n_2}}^p,
\end{align*}
it follows that $\{P_n^{(j)}(\omega_{j})\}_{n\geq 0}$, $j=0,1,\dots,\ell-1$ is a Cauchy sequence in $\RR$, then there exist values $f^{k}_{\omega_k}$ such that $ P_n^{(k)}(\omega_{k})\to f^{k}_{\omega_k}$, $k=0,1,\dots,\ell-1$.

Note that, from $\eqref{For-CompleProof}$ we obtain $\tilde{f}^{(k)}(\omega_k)=f^{k}_{\omega_k}$, for $\omega_k\in [-1,1]$,  where the derivative of $\tilde{f}$ at $\pm 1$, as is usual, means the corresponding usual side derivative.

Consider the vector ${f}(\varpi)=(f^{0}_{\omega_0},f^{1}_{\omega_1},\dots,f^{\ell-1}_{\omega_{\ell-1}})$ and the function
	$$f(x)=\funD{\tilde{f}(x)}{x\in[-1,1]}{\mathcal{A}_{\varpi,f(\varpi)}(x)}{x\in \RR\setminus [-1,1],}$$
Since $\tilde{f}^{(\ell-1)}$ is absolutely continuous on compact subsets of $\Omega^{(\ell-1)}\supset (-1,1)$ (see \cite[Def. 9]{RodAlvRomPestII}), the function $f$ belongs to $\UUp$ and satisfies
\begin{align*}
\normSp{P_n-f}^p&=\sum_{\omega_k\notin[-1,1]}\left|P^{(k)}_n(\omega_k)-f^{(k)}(\omega_k)\right|^p+\|P_n-f\|_{\tilde{\sob},p}^p\\
		&=\sum_{\omega_k\notin[-1,1]}\left|P^{(k)}_n(\omega_k)-f^{k}_{\omega_k}\right|^p+\|P_n-\tilde{f}\|_{\tilde{\sob},p}^p\longrightarrow 0.
\end{align*}

Finally, from $\normSp{f_n-f}\leq\normSp{f_n-P_n}+\normSp{P_n-f}\longrightarrow 0,$
we conclude that $\UUp$ is a complete space. Thus, as $\PP\subset \UUp\subset\WWp$ and $\WWp$ is the completion of $\PP$ we obtain $\WWp=\UUp$.

The second part of the proof is to show that the conditions over $\omega_k$ are also necessary. Then $\ell\geq 2$ and let us consider the case $\omega_{\ell-m}=1$ ($\alpha\geq mp-1$ if $p>1$ or $\alpha>m-1$ if $p=1$), the other case, when $\omega_{\ell-m}=-1$, is analogous. Take the sequence of functions
	\begin{align*}
		f_n(x)\!=\int_{c_0}^x\int_{c_1}^{x_1}\!\cdots\!\int_{c_{\ell-1}}^{x_{\ell1}}\!\!\mathbb{1}_{[a,a_n]}\!(x_{\ell})\phi_{\eta }(x_{\ell})dx_{\ell}dx_{\ell-1}\cdots dx_1, \quad n\geq 1,
	\end{align*}
	where  $\phi_m$ is the function defined in Proposition \ref{Pro-LpDifNoNull}, $\{a_n\}_{n=0}^{\infty}\subset \RR$ is an increasing sequence with $1$ as limit and $a:=a_0>0$. Since $\mathbb{1}_{[a,a_n]}\phi_{\eta} \in\Lp(\Jm)$ it follows that $f_n^{(\ell)}\in\JLp,$	so $f_n\in \UUp$ for $n\in\NN$.  Note also that, $f^{(k)}_n(c_k)=0$ for $n\geq 1$, $k=0,1,\dots,\ell-1$. Now we are going to prove that $f_n$ is a Cauchy sequence
	\begin{align*}
		\normSp{f_{n_1}-f_{n_2}}^p &=\sum_{k=0}^{\ell-1}\left|f^{(k)}_{n_1}(\omega_{k})-f^{(k)}_{n_2}(\omega_{k})\right|^p + \int_{-1}^{1} \left|f^{(\ell)}_{n_1}(x)-f_{n_2}^{(\ell)}(x)\right|^p d\Jm(x),\\
			&=\left|\int_{a_{n_2}}^{a_{n_1}} |\phi_m(x)|^pd\Jm(x)\right|.
	\end{align*}
	Since $\mathbb{1}_{[a,1)}\phi_m\in\Lp(\Jm)$, the sequence $\int_{a_n}^{1} |\phi_m(x)|^p d\Jm(x)$ tends to $0$ when $n \to\infty$, so it is a
	Cauchy sequence on $\RR$. Consequently, $f_n$ is a Cauchy sequence on $\UUp$. On the other hand, if $f\in\UUp$ is a function such that $f_n\to f$ in
	$\UUp$, then
	\begin{align*}
		\sum_{k=0}^{\ell-1}\left|f^{(k)}_n(\omega_{k})-f^{(k)}(\omega_{k})\right|^p + \int_{-1}^{1} \left|f^{(\ell)}_n(x)-f^{(\ell)}(x)\right|^p
		d\Jm(x)=\normSp{f_n-f}\longrightarrow 0.
	\end{align*}
	Hence, $f_n^{(k)}(\omega_k)\longrightarrow f^{(k)}(\omega_k)$ for $k=0,1,\cdots,\ell-1$ and
	\begin{align*}
		\int_{a}^{b_n} \left|\phi_m(x)-f^{(\ell)}(x)\right|^p d\Jm(x)\longrightarrow 0.
	\end{align*}
	Since $f\in\UUp$ we get $\mathbb{1}_{[a,1)}\phi_m-f^{(\ell)}\in\Lp(\Jm)$, so $$\int_{a}^{1} \left|\phi_m(x)-f^{(\ell)}(x)\right|^p d\Jm(x)=0,$$  and, as a consequence $f^{(\ell)}\equiv\phi_m$ a.e. on $[a,1]$.
	Thus we can write for $x\in[a,1)$
	\begin{align*}
		f^{(\ell-m)}(x)=\int_{a}^x\int_{a}^{x_1}\cdots\int_{a}^{x_{m-1}}\phi_{m}(x_{m})dx_{m}dx_{m-1}\cdots dx_1+Q_{m-1}(x), \quad x\in [a,1),
	\end{align*}
	where $Q_{m-1}$ is a polynomial of degree at most $m-1$.
	Now using that $\omega_{\ell-m}=1$ we obtain that $f$ has derivative of order $\ell-m$ on $[a,1]$ and the function
	$f^{(\ell-m)}$ is the derivative of $f^{(\ell-m-1)}$ on the closed interval $[a,1]$ (note that $m\leq \ell-1$). From Darboux's theorem
	\cite [Th. 5.12]{Rud76} it follows that	$f^{(\ell-m)}([1-\epsilon,1])$ is an interval for every sufficiently small 	$\epsilon>0$ and we 		get the contradiction $\lim_{x\to 1^{-}}f^{(\ell-m)}(x)=\infty$ (see Proposition \ref{Pro-LpDifNoNull}).% and $f^{(\ell-m)}(1)\in\RR$
 \end{proof}

\begin{corollary}\label{CoroFunc}\

\begin{enumerate}
\item[$\bullet$] If $\ell=1$ and $ p\geq 1$, then $\dsty \mathbf{W}^{\scriptscriptstyle\alpha,\beta}_{\scriptscriptstyle \omega_0,p}=\mathbf{U}^{\scriptscriptstyle \alpha,\beta}_{\scriptscriptstyle\omega_0,p}$,    $\forall \omega_0\in\RR.$
\item[$\bullet$] If  $\ell\geq 2$ and $p>1$, then
$$\dsty \WWp=\UUp,  \ \forall \varpi\in\RR^\ell,  \; \text{ if and only if } \; \alpha,\beta\in (-1,p-1).$$
\item[$\bullet$] If $\ell\geq 2$ and $p=1$,  then $$\WWp=\UUp, \;  \forall\varpi\in\RR^\ell,  \; \text{ if and only if } \; \alpha,\beta\in (-1,0].$$
\end{enumerate}
\end{corollary}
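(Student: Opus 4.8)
The plan is to derive every assertion directly from Theorem \ref{Th-Completeness}, using that $\WWp=\UUp$ holds exactly when $\UUp$ is complete (the affirmative clause of that theorem gives one direction, and the reverse is clear since $\WWp$, being a completion, is complete, while $\PP\subset\UUp\subset\WWp$). The first bullet is then immediate: the first alternative in Theorem \ref{Th-Completeness} asserts completeness for $\ell=1$ with no condition on $\omega_0$, so $\WWp=\UUp$ for every $\omega_0\in\RR$.

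For the second and third bullets I would fix $p$ and $\ell\geq2$ and analyze the quantifier ``for all $\varpi\in\RR^\ell$''. By Theorem \ref{Th-Completeness}, completeness for a given $\varpi$ is equivalent to membership of each $\omega_{\ell-m}$ ($m=1,\dots,\ell-1$) in the admissible set prescribed there. Demanding this for every $\varpi$ forces, for each such $m$, the admissible set to be all of $\RR$: otherwise an endpoint $\pm1$ is excluded, and placing $\omega_{\ell-m}$ there produces a $\varpi$ for which $\UUp$ fails to be complete. Reading off the four-way case split, the admissible set equals $\RR$ precisely in the first case, namely $\alpha,\beta\in(-1,mp-1)$ when $p>1$, and $\alpha,\beta\in(-1,m-1]$ when $p=1$. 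Note that $\omega_0$ corresponds to the absent index $m=\ell$ and is therefore never constrained, matching the remark after the theorem.

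Finally I would intersect these conditions over $m=1,\dots,\ell-1$. Since $mp-1$ (respectively $m-1$) is increasing in $m$, the binding constraint is the one at $m=1$, which occurs precisely because $\ell\geq2$; hence $\bigcap_{m}(-1,mp-1)=(-1,p-1)$ and $\bigcap_{m}(-1,m-1]=(-1,0]$, giving the stated equivalences. The converse directions are automatic: if $\alpha,\beta\in(-1,p-1)$ then $\alpha,\beta\in(-1,mp-1)$ for every $m\geq1$ (and likewise in the $p=1$ case), so the admissible set is $\RR$ for all $m$ and Theorem \ref{Th-Completeness} yields completeness for every $\varpi$. There is no genuine analytic obstacle here, since all the substance lives in Theorem \ref{Th-Completeness}; the only point requiring care is the handling of the universal quantifier over $\varpi$, i.e. recognizing that ``complete for all $\varpi$'' is equivalent to ``the admissible set is all of $\RR$ for each $m$'' and that the $m=1$ constraint dominates the rest.
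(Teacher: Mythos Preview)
Your argument is correct and is precisely the intended derivation: the paper states this as a corollary of Theorem~\ref{Th-Completeness} without separate proof, and your reasoning---reducing ``$\WWp=\UUp$ for all $\varpi$'' to ``the admissible set equals $\RR$ for each $m=1,\dots,\ell-1$'' and then taking the binding constraint at $m=1$---is exactly what the corollary records.
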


%%%%%%%%%%%%%%%%%%%%%%%%%%%%%%%%%%%%%%%%%%%%%%%%%%%%%%%%%%%%%%%%%%%%%%%%%%%%%%%%%%%%%%%%%%%%%%%%%%%%%%%
\section{Sobolev-type orthogonal polynomials and Fourier series}%\label{Sect-SobOrthPOly}
%%%%%%%%%%%%%%%%%%%%%%%%%%%%%%%%%%%%%%%%%%%%%%%%%%%%%%%%%%%%%%%%%%%%%%%%%%%%%%%%%%%%%%%%%%%%%%%%%%%%%%%

On $\PP$, let us consider the Sobolev-type  inner product, related to the norm (\ref{NorSobp}) with $p=2$
\begin{equation}\label{Sobolev-IP}
   \IpS{f}{g}:= \sum_{k=0}^{\ell-1} f^{(k)}(\omega_{k}) \, g^{(k)}(\omega_{k}) + \int_{-1}^{1} f^{(\ell)}(x) \,g^{(\ell)}(x) d\Jm(x).
\end{equation}

Let $\PRn$  be the $\ell$-th iterated integral  of $\pn$ (the $n$-th Jacobi orthonormal polynomial, as in \eqref{OrthNormal-Jacobi}),  normalized by the conditions
 \begin{equation}\label{AG-conditions}
\left(\PRn\right)^{(k)}(\omega_{k})=0, \quad k=0,1,\dots, \ell-1,
 \end{equation}
 or equivalently,
 \begin{equation*}
  \PRn(s) =\,\int_{\omega_{0} }^{s} \int_{\omega_{1}}^{s_{1}} \cdots \int_{\omega_{\ell-2}}^{s_{\ell-2}} \int_{\omega_{\ell-1}}^{s_{\ell-1}}  \pn(s_{\ell}) ds_{\ell}ds_{\ell-1}\cdots ds_{2}ds_{1}  .
 \end{equation*}

%%%%%%%%%%%%%%%%%%%%%%%%%%%%%%%%%%%%

%%%%%%%%%%%%%%%%%%%%%%%%%%%%%%%%%%%%%%%%%%%%%%%%%%%%%%%%%%%%%%%%%%%%%%%%%%%%%%%%%%
\begin{theorem} The polynomials
	\begin{equation}\label{MonicSOP}
		\sn(x)=\funD{\mathcal{G}_{\varpi,n}(x)}{0\leq n \leq \ell-1}{\PRn[n-\ell](x)}{\ell \leq n,}
	\end{equation}
constitute a family of orthonormal polynomials  with respect to \eqref{Sobolev-IP} on $\WWp[2]$, which is also complete.
\end{theorem}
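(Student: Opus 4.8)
The plan is to establish two things: that the system $\{\sn\}_{n\ge0}$ is orthonormal for $\IpS{\cdot}{\cdot}$, and that it is complete in the Hilbert space $\WWp[2]$. I would begin with the \emph{degree bookkeeping}, since it drives everything. For $0\le n\le\ell-1$ the Goncharov polynomial $\mathcal{G}_{\varpi,n}$ has degree $n$, and for $n\ge\ell$ the polynomial $\PRn[n-\ell]$ is the $\ell$-fold iterated integral of $\pn[n-\ell]$ (of degree $n-\ell$), hence has degree $n$. Thus $\deg\sn=n$ for every $n$, so $\{\sn[0],\dots,\sn[N]\}$ is a basis of the polynomials of degree at most $N$, and therefore $\mathrm{span}\{\sn\}_{n\ge0}=\PP$.

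The heart of the proof is the computation of $\IpS{\sn[n]}{\sn[m]}$, which I would organize into three cases, exploiting the key observation that the inner product \eqref{Sobolev-IP} \emph{decouples}: its discrete part only detects the jet $\big(f(\omega_0),\dots,f^{(\ell-1)}(\omega_{\ell-1})\big)$ at the nodes, while its integral part only detects $f^{(\ell)}$. When $n,m\le\ell-1$, both $\ell$-th derivatives vanish because the degrees are below $\ell$, so only the discrete sum survives; by \eqref{Goncharov_PolyDer} the $k$-th summand equals $\delta_{n,k}\delta_{m,k}$, and summing over $k=0,\dots,\ell-1$ yields $\delta_{n,m}$. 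When exactly one index lies below $\ell$, say $n\le\ell-1$ and $m\ge\ell$ (the reverse being symmetric), the factor $\mathcal{G}_{\varpi,n}$ has degree $<\ell$, so its $\ell$-th derivative kills the integral, while the discrete sum vanishes because $\PRn[m-\ell]$ satisfies the Abel--Goncharov conditions \eqref{AG-conditions}; hence the two polynomials are orthogonal. Finally, when $n,m\ge\ell$, the discrete sum vanishes since both factors satisfy \eqref{AG-conditions}, and using $(\PRn[j])^{(\ell)}=\pn[j]$ the integral reduces to the Jacobi inner product $\IpJ{\pn[n-\ell]}{\pn[m-\ell]}$, which equals $\delta_{n,m}$ by the orthonormality of $\{\pn\}$ recorded in \eqref{OrthNormal-Jacobi}. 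This settles orthonormality in all cases.

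For completeness, I would note that $\WWp[2]$ is by definition the completion of the inner-product space $(\PP,\normSp[2]{\cdot})$, hence a Hilbert space in which $\PP$ sits as a dense subspace. Since the orthonormal system $\{\sn\}_{n\ge0}$ spans $\PP$ by the degree argument of the first paragraph, its closed linear span is all of $\WWp[2]$; equivalently, no nonzero element of $\WWp[2]$ is orthogonal to every $\sn$. Thus $\{\sn\}_{n\ge0}$ is a complete orthonormal system (an orthonormal basis) of $\WWp[2]$.

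I do not expect a genuine obstacle here: the argument is structural rather than technical. The one point demanding care is the \emph{decoupling} observation together with the degree count --- one must verify that in each case exactly one of the two parts of \eqref{Sobolev-IP} survives, and that the surviving part evaluates through either \eqref{Goncharov_PolyDer}, the normalization \eqref{AG-conditions}, or the Jacobi orthonormality. Everything else is routine.
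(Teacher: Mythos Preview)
Your proposal is correct and follows essentially the same approach as the paper: establish the degree count, verify the key evaluation formula $\sn^{(k)}(\omega_k)=\delta_{n,k}$ for $0\le k\le\ell-1$ via \eqref{Goncharov_PolyDer} and \eqref{AG-conditions}, then split the inner-product computation according to whether the indices lie below or above $\ell$, and finally deduce completeness from the density of $\PP$ in $\WWp[2]$. The only cosmetic difference is that the paper collapses your three cases into two (treating ``at least one index $<\ell$'' at once, since then $\sn[m]^{(\ell)}\sn^{(\ell)}\equiv 0$), but the substance is identical.
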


The algebraic and analytical properties of the polynomials $\{\sn\}_{n\geq0}$  have been studied in \cite{PiRi17}.
\begin{proof} First, note that, for each $n\geq 0$, $\sn$ is a polynomial of degree $n$ and from \eqref{Goncharov_PolyDer} and  \eqref{AG-conditions}
\begin{align}\label{OrtPto}
	\sn^{(k)}\!(\omega_k)=\funD{1}{n=k}{0}{n\neq k,}\qquad \text{for } \;0\leq k\leq \ell-1.
\end{align}
 Now, we are going to prove that $\{\sn\}_{n\geq 0}$ is  an  orthonormal polynomial sequence with respect to \eqref{Sobolev-IP}.

 If $m,n\geq\ell$, then from \eqref{MonicSOP} and \eqref{OrtPto} %$\dsty \left(\PRn[n-\ell]\right)^{(\ell)}=\pn[n-\ell]$
$$
 \IpS{\sn[m]}{\sn}= \int_{-1}^{1}\fpn[m-\ell]{\alpha}{\beta}(x) \,\fpn[n-\ell]{\alpha}{\beta}(x) d\Jm(x)=\funD{1}{n=m}{0}{n\neq m.}
$$
Otherwise, we get that $\sn[m]^{(\ell)}\sn^{(\ell)}\equiv 0$ and from \eqref{OrtPto}
$$
	\IpS{\sn[m]}{\sn}= \sum_{k=0}^{\ell-1} \sn[m]^{(k)}(\omega_{k}) \, \sn^{(k)}(\omega_{k})=\funD{1}{n=m}{0}{n\neq m.}
$$

The completeness is a direct consequence of the  fact that  $\WWp=\overline{\PP}.$
 \end{proof}

%%%%%%%%%%%%%%%%%%%%%%%%%%%%%%%%%%%%%%%%%%%%%%%%%%%%%%%%%%%%%%%%%%%%%%%%%%%%%%%%%%%%%%%%%
Our next step is to define rigorously what means the Fourier series of  $f\in\WWp$, where $p\geq 1$, $\ell\in \NN$, $\alpha,\beta>-1$ and $\varpi=(\omega_0,\omega_1, \dots, \omega_{\ell-1})\in \RR^{\ell}$.

For $f\in\UUp$ we denote
$$ \FSSn(f,x):=   \sum_{k=0}^{n}\IpS{f}{\sn[k]}\; \sn[k](x).$$
Under the above assumptions of  Theorem  \ref{Th-Completeness}, \;   $\WWp={\UUp}$ but, in general, we only know that $\WWp=\overline{\UUp}$. However, if $f\in \WWp\setminus\UUp$, then  $f$ can be identified as a Cauchy sequence $\{f_n\}_{n\geq 0}\subset\UUp$ with norm $\normSp{f}=\lim_{n\to\infty}\normSp{f_n}.$

The existence of this limit is guaranteed since $\{\normSp{f_n}\}_{n\geq 0}$  is a Cauchy sequence in $\RR$. On the other hand, if $\{f_n\}_{n\geq 0}$ is a Cauchy sequence in $\WWp$ we get
\begin{align*}
	\normSp{f_n-f_m}^p=\sum_{k=0}^{\ell-1}\left|f^{(k)}_n(\omega_{k})-f^{(k)}_m(\omega_{k})\right|^p + \normJLp{f^{(\ell)}_n(x)-f_{m}^{(\ell)}}^p,
\end{align*}
where the norm  $\normJLp{\cdot}$ was defined in \eqref{Jacobi-pNorm}. Therefore, the sequences $\{f^{(k)}_n(\omega_k)\}_{n\geq 0}$, $k=0,1,\dots,\ell-1$ and $\{f^{(\ell)}_{n}\}_{n\geq 0}$ are also Cauchy sequences on the complete spaces $\RR$ and $\JLp$, respectively. So there exist the limits
$$
f^{k}_{\omega_k}=\lim_{n\to\infty}f^{(k)}_{n}(\omega_k), \; k=0,1,\dots,\ell-1;  \; \text{ and } \;	f^{\ell}=\lim_{n\to\infty} f^{(\ell)}_n.$$
Then, we get
$$\normSp{f}^p =\lim_{n\to\infty}\normSp{f_n}^p=\lim_{n\to\infty} \sum_{k=0}^{\ell-1}\left|f^{(k)}_n(\omega_{k})\right|^p +  \normJLp{f^{(\ell)}_n}^p=\sum_{k=0}^{\ell-1}\left|f^{k}_{\omega_{k}}\right|^p + \normJLp{f^{\ell}}^p .
$$

 From the proof of Theorem \ref{Th-Completeness}, we see that there is not always a suitable function $f$ satisfying
 \begin{equation*}
  f^{(k)}(\omega_k)=f^{k}_{\omega_{k}}, \text{ for } k=0,1,\cdots,\ell-1; \quad \text{ and } \quad f^{(\ell)}\equiv f^{\ell} \ \text{ a.e. on } [-1,1].
 \end{equation*}
But even so, we can understand the value $f^{k}_{\omega_{k}}$ as the $k$-th derivative of the equivalent class $f$ at $\omega_k$ and the function $f^{\ell}\in \JLp$ as its $\ell$-th derivative a.e. on $[-1,1]$. Then, in a natural way, we can define for every $ f\in\WWp$
$$
   \IpS{f}{\sn}= \sum_{k=0}^{\ell-1} f^{k}_{\omega_{k}} \sn^{(k)}(\omega_{k}) + \int_{-1}^{1} f^{\ell}(x) \,\sn^{(\ell)}(x) d\Jm(x),
$$
and now the Fourier series of $f $ makes sense for all $f\in\WWp.$

 From \eqref{MonicSOP}-\eqref{OrtPto}, it is straightforward to prove that  if $f \in \WWp$, then
\begin{equation}\label{FourCoef-1}
\IpS{f}{\sn[n]}=\funD{f^{(n)}(\omega_n)}{0\leq n \leq \ell-1}{\IPJ{f^{(\ell)}}{\pn[n-\ell]}}{\ell \leq n.}
\end{equation}

\begin{theorem}\label{Th-FourSer-Converg} Let $ \alpha,\beta > -1,$
\begin{equation}\label{New-MnDef}
  M(\alpha,\beta)=2\,\frac{\max\{\alpha,\beta,-\frac{1}{2}\}+1}{\max\{\alpha,\beta,-\frac{1}{2}\}+\frac{3}{2}}\quad  \text{and}\quad  m(\alpha,\beta)=2\, \frac{\max\{\alpha,\beta,-\frac{1}{2}\}+1}{\max\{\alpha,\beta,-\frac{1}{2}\}+\frac{1}{2}}.
\end{equation}
If 	$M(\alpha, \beta) < p < m(\alpha, \beta)$, then, for any  $f\in\WWp$, its Fourier expansion   with respect of the polynomials $\{q_n\}_{n\geq 0}$, converges to $f$ in $\WWp$, i.e.
$$		\lim\limits_{n\rightarrow \infty } \normSp{f-\FSSn(f,\cdot)}=0.$$
\end{theorem}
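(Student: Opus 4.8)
The plan is to exploit the fact that the Sobolev norm \eqref{NorSobp} splits into $\ell$ point evaluations of low-order derivatives at the nodes $\omega_0,\dots,\omega_{\ell-1}$ and a single $\Lp[p](\Jm)$-term involving only the $\ell$-th derivative, and that the orthonormal basis $\{\sn\}_{n\ge0}$ of \eqref{MonicSOP} was designed precisely to respect this splitting. First I would differentiate the partial sum $\ell$ times. Since $\sn[k]=\mathcal{G}_{\varpi,k}$ has degree $k<\ell$ for $0\le k\le\ell-1$, those terms drop out, while for $k\ge\ell$ one has $\sn[k]^{(\ell)}=(\PRn[k-\ell])^{(\ell)}=\pn[k-\ell]$ because $\PRn[k-\ell]$ is the $\ell$-th iterated integral of $\pn[k-\ell]$. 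Combining this with the coefficient identity \eqref{FourCoef-1}, namely $\IpS{f}{\sn[k]}=\IPJ{f^{(\ell)}}{\pn[k-\ell]}$ for $k\ge\ell$, I obtain for every $n\ge\ell$ the key identity
$$\FSSn(f,\cdot)^{(\ell)}=\sum_{j=0}^{n-\ell}\IPJ{f^{(\ell)}}{\pn[j]}\,\pn[j]=\FSJn[n-\ell](f^{(\ell)},\cdot),$$
that is, the $\ell$-th derivative of the Fourier--Sobolev partial sum of $f$ is exactly the Fourier--Jacobi partial sum of $f^{(\ell)}$.

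Next I would handle the discrete part. For $0\le k\le\ell-1$ and $n\ge\ell-1$, differentiating $\FSSn(f,\cdot)$ $k$ times and evaluating at $\omega_k$, the normalisation \eqref{OrtPto} together with \eqref{AG-conditions} forces $\sn[j]^{(k)}(\omega_k)=\delta_{j,k}$, so only the $j=k$ term survives and $\FSSn(f,\cdot)^{(k)}(\omega_k)=\IpS{f}{\sn[k]}=f^{(k)}(\omega_k)$ by \eqref{FourCoef-1}. Hence the entire discrete contribution to the error vanishes as soon as $n\ge\ell-1$, and the reduction
$$\normSp{f-\FSSn(f,\cdot)}=\normJLp{f^{(\ell)}-\FSJn[n-\ell](f^{(\ell)},\cdot)}$$
holds for all $n\ge\ell$. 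I would take care to record that, for $f\in\WWp\setminus\UUp$, the symbols $f^{(k)}(\omega_k)$ and $f^{(\ell)}$ stand for the limiting values $f^{k}_{\omega_k}$ and $f^{\ell}\in\JLp$ attached to the representing Cauchy sequence, so that both \eqref{FourCoef-1} and the identity above remain valid verbatim; this is what allows the argument to run for arbitrary $f\in\WWp$ rather than only for smooth $f$.

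With the reduction in hand, the statement follows by applying the mean convergence of Fourier--Jacobi series to $g:=f^{(\ell)}\in\JLp$: by Pollard (Theorem \ref{ThPollard}) together with Muckenhoupt's extension to all $\alpha,\beta>-1$ (the corollary of Theorem \ref{ThMuck}), one has $\normJLp{g-\FSJn(g,\cdot)}\to0$ whenever $p$ lies in the admissible interval, and therefore $\normJLp{g-\FSJn[n-\ell](g,\cdot)}\to0$ as $n\to\infty$. The only genuinely delicate point is to verify that this admissible interval is exactly $\bigl(M(\alpha,\beta),m(\alpha,\beta)\bigr)$ as defined in \eqref{New-MnDef}. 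I would do this by inserting the Jacobi weights $a=\alpha/p$, $b=\beta/p$ into Muckenhoupt's condition \eqref{Muck-Cond}, which collapses to $(\gamma+1)\,\lvert 1/p-1/2\rvert\le\min\{1/4,(\gamma+1)/2\}$ separately for $\gamma=\alpha$ and $\gamma=\beta$.

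For $\gamma\ge-1/2$ this yields $\tfrac{4(\gamma+1)}{2\gamma+3}\le p\le\tfrac{4(\gamma+1)}{2\gamma+1}$, while for $-1<\gamma<-1/2$ it is vacuous for $p>1$, i.e. it coincides with the $\gamma=-1/2$ constraint. Since $t\mapsto\tfrac{t+1}{2t+3}$ is increasing and $t\mapsto\tfrac{t+1}{2t+1}$ is decreasing, the two constraints combine into the single interval \eqref{New-MnDef} governed by $\max\{\alpha,\beta,-1/2\}$; in particular $M(\alpha,\beta)\ge1$, so $p>1$ throughout the admissible range and Muckenhoupt's inequality genuinely applies. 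Thus I expect the main obstacle to be purely bookkeeping --- confirming the exact range and the correct identification of the abstract derivatives of $f\in\WWp$ --- rather than any hard analytic estimate, since all of the singular-integral machinery is already packaged in Theorems \ref{ThPollard}--\ref{ThMuck}, and no separate uniform boundedness argument at the Sobolev level is needed.
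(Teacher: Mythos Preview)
Your proposal is correct and rests on the same structural observations as the paper: that $\FSSn^{(k)}(f,\omega_k)=f^{(k)}(\omega_k)$ for $n\ge k$ and $\FSSn^{(\ell)}(f,\cdot)=\FSJn[n-\ell](f^{(\ell)},\cdot)$ for $n\ge\ell$, together with the care taken to interpret $f^{(k)}(\omega_k)$ and $f^{(\ell)}$ for abstract $f\in\WWp$.

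The only real difference is in the final step. The paper uses these identities to prove a uniform bound $\normSp{\FSSn(f,\cdot)}\le C_1\normSp{f}$ via Theorem~\ref{ThMuck}, and then concludes convergence by the standard ``uniform boundedness plus density of polynomials'' argument. You instead write down the exact error identity $\normSp{f-\FSSn(f,\cdot)}=\normJLp{f^{(\ell)}-\FSJn[n-\ell](f^{(\ell)},\cdot)}$ for $n\ge\ell$ and invoke the \emph{convergence} of Fourier--Jacobi series (Pollard/Muckenhoupt's corollary) directly. Your route is a little shorter and makes the reduction to the classical Jacobi case completely transparent; the paper's route has the minor advantage of isolating the uniform boundedness of the Sobolev partial-sum operators as a statement in its own right. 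Either way, the analytic content is entirely supplied by Theorems~\ref{ThPollard}--\ref{ThMuck}, and your verification that the admissible range coincides with \eqref{New-MnDef} is correct.
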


It is not difficult to verify that the definition of $M(\alpha,\beta)$ and $m(\alpha,\beta)$ in \eqref{New-MnDef} is the same as in  Theorem  \ref{ThPollard} (when $ \alpha,\beta \geq -1/2$) and in \eqref{Muck-Cond} (with $a=\alpha/p$ and $b =\beta/p$).

Let $ \alpha,\beta > -1$,  $\gamma=\max\{\alpha,\beta\}$,
$$
 M(\gamma)=2\,\frac{\max\{\gamma,-\frac{1}{2}\}+1}{\max\{\gamma,-\frac{1}{2}\}+\frac{3}{2}}\quad  \text{and}\quad  m(\gamma)=2\, \frac{\max\{\gamma,-\frac{1}{2}\}+1}{\max\{\gamma,-\frac{1}{2}\}+\frac{1}{2}}.
$$
$(\gamma,p)$ is said to be a convergence pair if $ M(\gamma) < p < m(\gamma)$. In Figure \ref{Fig-1}, the entire shaded region $\Delta$ (light and dark) is the set of all pairs $(\gamma,p)$ of convergence. The dark region $\Delta_0$, at the left, is the set of $(\gamma,p)$ such that  $\WWp$ is a space of suitable functions;  i.e. $\dsty \WWp=\UUp,$ for all  $\varpi\in\RR^\ell$ (see Corollary \ref{CoroFunc}).
\begin{figure}[ht]
  \centering
  \includegraphics[width=.9\textwidth]{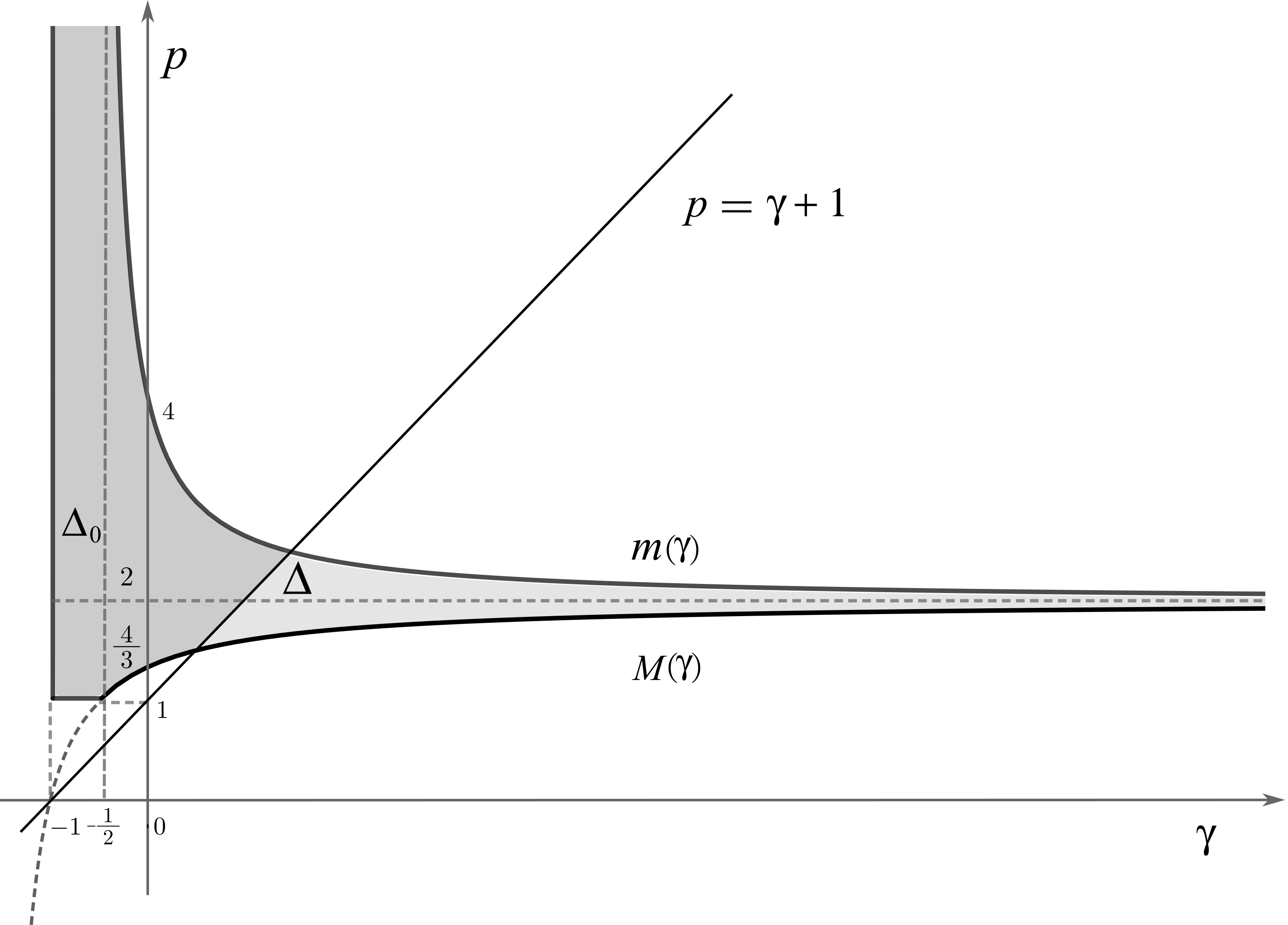}
  \caption{The open regions $\Delta=\{(\gamma,p): M(\gamma) < p < m(\gamma)\}$ and $\Delta_0=\{(\gamma,p)\in \Delta: p>\gamma+1\}$.}\label{Fig-1}
\end{figure}

\begin{proof}[Proof of Theorem \ref{Th-FourSer-Converg}]  Given $f\in\JLp$, let $\FSJn(f,\cdot)$ be the $n$-th partial sum of the Fourier-Jacobi series of $f$
$$\FSJn(f,x)= \sum_{k=0}^{n} \IPJ{f}{\pn[k]}\;  \pn[k](x).$$

Now, given $f  \in\WWp$, from \eqref{FourCoef-1}

1.-  If  $0\leq k \leq \ell-1$, then $\dsty \FSSn^{(k)}\!\!(f,\omega_k)=\funD{0}{n< k}{f^{(k)}(\omega_k)}{n\geq k.}$

2.-  If $n\geq\ell$, then $\FSSn^{(\ell)}\!\!(f,x)=\FSJn[n-\ell](f^{(\ell)}\!,x)$.

From Theorem \ref{ThMuck},  with $a=\alpha/p$ and $b =\beta/p$, there exists a constant $C$, independent on $n$ and $f$, such that
$\normJLp{\FSJn[n-\ell](f^{(\ell)},x)} \leq C \normJLp{f^{(\ell)}}. $

We first prove that there exists a constant $C_1$, independent on $n$ and $f$, such that
\begin{equation}\label{Th5-NormIneq}
  \normSp{\FSSn(f,\cdot)} \leq C_1 \normSp{f}, \qquad \text{for all  $f\in\WWp$.}
\end{equation}
If  $n\geq\ell$, then we have
\begin{align*}
 \normSp{\FSSn(f,\cdot)}^p=&\sum_{k=0}^{\ell-1} \left|\FSSn^{(k)}\!(f,\omega_{k})\right|^{p} + \int_{-1}^{1} \left|\FSSn^{(\ell)}\!(f,x)\right|^{p}\!\!d\Jm(x)\\
 = & \sum_{k=0}^{\ell-1} \left|f^{(k)}(\omega_{k})\right|^{p} + \normJLp{\FSJn[n-\ell](f^{(\ell)},\cdot)}^p \\
\leq&\sum_{k=0}^{\ell-1} \left|f^{(k)}(\omega_{k})\right|^{p} + C^p\normJLp{f^{(\ell)}}^p \leq \max\{1,C^p\}\normSp{f}^p.
\end{align*}
If $0\leq n\leq \ell-1$, then we get
\begin{align*}
 \normSp{\FSSn(f,\cdot)}^p=&\sum_{k=0}^{\ell-1} \left|\FSSn^{(k)}\!(f,\omega_{k})\right|^{p} + \int_{-1}^{1} \left|\FSSn^{(\ell)}\!(f,x)\right|^{p} d\Jm(x)\\
 \leq & \sum_{k=0}^{\ell-1} \left|f^{(k)}(\omega_{k})\right|^{p} \leq \normSp{f}^p.
\end{align*}
This yields formula \eqref{Th5-NormIneq} with $\dsty C_1=\max\{1,C\}$.

Given $f\in\WWp$ and $\epsilon>0$, then there exists $P\in\PP$,  such that $ \normSp{f-P}<\frac{\epsilon}{C_1+1},$ where $C_1$ is given by \eqref{Th5-NormIneq}. Note that for all $n$ greater than or equal to the degree of $P$ we get $P\equiv \FSSn(P,\cdot)$ and, as a consequence,
\begin{align*}
\normSp{\FSSn(f,\cdot)-f}\!\leq &\normSp{\FSSn(f,\cdot)-P}\!+\!\normSp{P-f}\!=\!\normSp{\FSSn(f,\cdot)-\FSSn(P,\cdot)}\!+\!\normSp{P-f}\\
=& \normSp{\FSSn(f-P,\cdot)}\!+\!\normSp{P-f}\leq (C_1+1)\normSp{P-f}< \epsilon.
\end{align*}
Therefore,  $\lim\limits_{n\rightarrow \infty } \normSp{f-\FSSn(f,\cdot)}=0.$
 \end{proof}

\section*{Acknowledgements}
%%%%%%%%%%%%%%%%%%%%%%%%%%%%%%%%%%%%%%%%%%%%%%%%%%%%%%%%%%%%%%%%%%%%%%%%%%%%%%%%%%%%%%%%%%%%%%%%%%%%%%%%%%%%%%%%%%%%%%%%%%%%%%%%%%%%%%
The research of F. Marcell\'{a}n and H. Pijeira-Cabrera was   partially supported by  Ministry of Science, Innovation and Universities of Spain, under grant  PGC2018-096504-B-C33. The research of A. D\'{\i}az-Gonz\'{a}lez was supported by the Research Fellowship Program, Ministry of Economy and Competitiveness of Spain,  under grant  BES-2016-076613.

%%%%%%%%%%%%%%%%%%%%%%%%%%%%%%%%%%%%%%%%%%%%%%%%%%%%%%%%%%%%%%%%%%%%%%%%%%%%%%%%%%%

\end{document}